\documentclass[11pt]{article}\textwidth 160mm\textheight 235mm
\oddsidemargin-2mm\evensidemargin-2mm\topmargin-10mm
\usepackage[table,xcdraw]{xcolor}
\usepackage{amsfonts}
\usepackage{amssymb}
\usepackage{graphicx}
\usepackage{amsmath}
\usepackage{amsthm}
\usepackage{enumitem}
\usepackage{tikz}
\usepackage{float}
\usepackage{cancel}
\usepackage{todonotes}
\usepackage{tcolorbox}
\tcbuselibrary{skins}
\usepackage{lipsum}
\usetikzlibrary{matrix}
\usetikzlibrary{arrows,shapes}
\usepackage{cite}
\usepackage{hyperref}
\hypersetup{colorlinks=true, urlcolor=blue}
\expandafter\let\expandafter\oldproof\csname\string\proof\endcsname
\let\oldendproof\endproof
\renewenvironment{proof}[1][\proofname]{%
  \oldproof[\ttfamily \scshape \bf #1. ]%
}{\oldendproof}
\def\O{{\cal O}}

\def\B{\mathbb{B}}
\def\R{{\rm I\!R}}
\def\N{{\rm I\!N}}
\def\ox{\bar{x}}
\def\oy{\bar{y}}

\def\P{\mathbb{P}}
\def\ss{\scriptsize }


\def\X{{\mathbb X}}

\def\dist{{\rm dist}}

\def\d{{\rm d}}

\def\sub{\partial}

\def\disp{\displaystyle}

\def\Bar{\overline}
\def\ra{\rangle}
\def\la{\langle}

\def\co{\mbox{\rm co}\,}
\def\cone{\mbox{\rm cone}\,}

\def\epi{\mbox{\rm epi}\,}

\def\dom{\mbox{\rm dom}\,}

\def\proj{\mbox{\rm proj}\,}

\def\e{\mbox{\rm e}\,}

\def\Lim{\mbox{\rm Lim}\,}

\def\dn{\downarrow}
\def\O{\Omega}

\def\st{\stackrel}
\def\oR{\Bar{\R}}

\def\dd{\delta}
\def\al{\alpha}

\def\sm{\hbox{${1\over 2}$}}

\def\sce{\setcounter{equation}{0}}


\begin{document}
\vspace*{0.5in}
\begin{center}
{\bf FIRST-ORDER VARITIONAL ANALYSIS OF NON-AMENABLE COMPOSITE FUNCTIONS }\\[1 ex]
ASHKAN MOHAMMADI \footnote{Department of Mathematics and Statistics, Georgetown University, Washington, DC 20057, USA (ashkan.mohammadi@georgetown.edu).}

\end{center}
\vspace*{0.05in}
\small{\bf Abstract.} This paper is devoted to studying the first-order variational analysis of non-convex and non-differentiable functions that may not be subdifferentially regular. To achieve this goal, we entirely rely on two concepts of directional derivatives known as subderivative and semi-derivative. We establish the exact chain and sum rules for this class of functions via these directional derivatives. These calculus rules provide an implementable auto-differentiation process such as back-propagation in composite functions. The latter calculus rules can be used to identify the directional stationary points defined by the subderivative. We show that the distance function of a geometrically derivable constraint set is semi-differentiable, which opens the door for designing first-order algorithms for non-Clarke regular constrained optimization problems. We propose a first-order algorithm to find a directional stationary point of non-Clarke regular and perhaps non-Lipschitz functions. We introduce a descent property under which we establish the non-asymptotic convergence of our method with rate $O(\varepsilon^{-2})$, akin to gradient descent for smooth minimization. We show that the latter descent property holds for free in some interesting non-amenable composite functions, in particular, it holds for the Moreau envelope of any bounded-below function.\\[1ex]

{\bf Key words.} nonsmooth nonconvex optimization, variational analysis, deep neural network, bilevel programming, gradient descent,  back-propagation  \\[1ex]
{\bf  Mathematics Subject Classification (2000)}  49J53, 49J52, 90C31

\newtheorem{Theorem}{Theorem}[section]
\newtheorem{Proposition}[Theorem]{Proposition}
\newtheorem{Remark}[Theorem]{Remark}
\newtheorem{Lemma}[Theorem]{Lemma}
\newtheorem{Corollary}[Theorem]{Corollary}
\newtheorem{Definition}[Theorem]{Definition}
\newtheorem{Example}[Theorem]{Example}
\newtheorem{Algorithm}[Theorem]{Algorithm}
\renewcommand{\theequation}{{\thesection}.\arabic{equation}}
\renewcommand{\thefootnote}{\fnsymbol{footnote}}

\normalsize

\section{Introduction}\sce
In this paper, we consider the composite optimization problem
\begin{equation}\label{comp}
\mbox{minimize} \;\;  (g \circ F) (x) \quad \mbox{subject to}\;\; x \in \R^n,
\end{equation}
where $g:\R^m \to \oR :=(-\infty , + \infty]$ is lower semicontinuous, and $F: \R^n \to \R^m$ is semi-differentiable in the point of question; see section \ref{sect01} for definition of semi-differentiability. Problem \eqref{comp} covers constrained optimization problems when $g$ is an extended-real-valued function. Indeed, by redefining $g$ and $F$ as $g(y) + \delta_{X}(z)$ and $(F(x) ,  G(x))$, problem \eqref{comp} boils down to the constrained optimization problem
\begin{equation}\label{ccomp}
\mbox{minimize} \;\;  f(x):= (g \circ F) (x) \quad \mbox{subject to}\;\; G(x) \in X
\end{equation}
where $ G : \R^n \to \R^k$,  and where $X$ is a closed (possibly non-convex) set in $\R^k$. The broadness of problems \eqref{comp} and \eqref{ccomp} allows us to cover many optimization problems, in particular, those with amenable composite functions; see \cite{bf, lw, zl, l, dp, ddmp, dil, r21}. Recall from \cite[Definition~10.23]{rw} that the composite function $f =g \circ F$ is called amenable if $g$ is convex, $F$ is continuously differentiable, and a constraint qualification is present. In this paper, we are specially interested in non-amenable composite function which have not been investigated extensively. More precisely, the composite function $g \circ F$ is called non-amenable if either $g$ is non-convex or $F$ is non-differentiable. Our main objectives in this paper are
\begin{itemize}
\item[•]to obtain calculus rules for subderivative and semi-derivative,
\item[•]to characterize a suitable notion of stationary point via the established calculus rules,
\item[•]to design an algorithm by which we can compute such stationary points.
\end{itemize}
\subsection{Our Contribution}

Our approach to handle the composite optimization problem \eqref{comp} is purely primal which allows us to avoid Lagrangian multipliers. More precisely, we use subderivative as a generalized directional derivative \cite[p.~257]{rw}, that is for a given function $f:\R^n \to \oR = (- \infty , \infty]$ and a point $\ox$ with $f(\ox)$ finite, the subderivative function $\d f(\ox)\colon\R^n\to[-\infty,\infty]$ is defined by
\begin{equation}\label{subderivatives}
{\mathrm d}f(\ox)(w):=\liminf_{\substack{
   t\dn 0 \\
  w' \to w
  }} {\frac{f(\ox+tw')-f(\ox)}{t}}.
\end{equation}
Note that when $f$ is differentiable at $\ox$, we have ${\mathrm d}f(\ox)(w) = \la \nabla f(\ox) , w \ra .$ We establish a chain rule for the subderivative of composite function \eqref{comp}. Utilizing subderivative, we propose a generalized gradient descent method, where we call it the subderivative method: 
\vspace{.5cm}
 \begin{table}[H]
        \renewcommand{\arraystretch}{1}
        \centering
        \label{alg1}
        \begin{tabular}{|
                p{0.9\textwidth} |}
            \hline
            \textbf{Subderivative Method}\\
 
            \begin{enumerate}[label=(\alph*)]
            
                \item[0.] \textbf{(Initialization)} Pick the tolerance $\epsilon \geq 0$, a starting point $x_0 \in \R^n$, and set $k=0 .$
           
                \item[1.] \textbf{(Termination)} Stop, if a prescribed stopping criterion is satisfied.
                \item[2.] \textbf{(Direction Search)} Pick $w_k \in \mbox{arg min}_{\|w\| \leq 1 }   \d f(x_k)(w).$
                \item[3.] \textbf{(Line Search)} Choose a step size $\alpha_{k}  > 0$ through a line search method.
                \item[4.] \textbf{(Update)} Set $x_{k+1} : = x_k + \alpha_k  w_k$ and $k+1  \leftarrow k$ and then go to step 1. 
            \end{enumerate}
            \\ \hline
        \end{tabular}
    \end{table}
\vspace{.5cm}    
The subderivative method reduces to the classical gradient descent method if $f$ is differentiable. Indeed, if $x_k$ is a differentiable point of $f$, with $ \nabla f(\ox) \neq 0$, the direction search in the subderivative method is $w_k =  - \frac{ \nabla f(\ox)}{\| \nabla f(\ox)\|}$. We establish a convergence of the subderivative method with the Armijo backtracking line search under the following condition
\begin{equation*}\label{descet_pro}
f(y) \leq f(x) + \d f(x) (y-x) + \frac{L}{2}  \| y -x  \|^2
 \quad \quad x,y \in \R^n  .\end{equation*}
Notice that when $f$ is $L$ -smooth, that is $\nabla f$ is Lipschitz continuous with constant $L$, the above property follows from the classical descent lemma; see \cite{N}. Therefore, the subderivative method together with its convergence analysis reduces to the classical gradient descent method for $L$-smooth functions. The rest of the paper is organized as follows. In section \ref{sect01} we mention some examples in the framework of problems \eqref{comp} or \eqref{ccomp}, then briefly point out the related works for each of them. Section \ref{sect02} is for the variational analysis of problem \eqref{comp}. In particular, we establish calculus rules for subderivative to identify the directional stationary points of problem \eqref{comp} and consequently obtain the directional stationary points for all examples in section \ref{sect01}. Finally, in section \ref{sec03}, we propose the subderivative method to find an directional stationary point of problem \eqref{comp} with some positive tolerance.   
\section{Examples}\sce\label{sect01}

Below we mention several examples of non-amenable composite optimization problems which fall into framework \eqref{comp}. The generalized calculus established in section \ref{sect02} enables us to compute the subderivative of the essential function of each example. Nevertheless, each example requires full treatment of numerical analysis for (approximately) solving the subderivative method's sub-problems. The latter remains for our future work; in \cite{m22}, where we give a full treatment of the numerical analysis of Example \ref{dmax}.

\begin{Example}[\bf training a neural network problem with the ReLU activation]\label{tnn}{\rm
Fix positive integers $N, n_i \in \N$ for $i=1,...,N$. Let $W^{i} \in \R^{n_{i-1} \times n_{i}} $ and $b^{i} \in \R^{n_i}$ for $i=1,2,..N$ stand for the wieghts and biases which are yet to be determined. Fix functions the vector valued functions $\sigma^{i} : \R^{n_i} \to \R^{n_{i}}$ for $i=1,2,.., N-1$ for which the j-th component of each $\sigma^i$ is the ReLU activation function acting on the j-th component of its input, i.e.,  $[\sigma^i]_{j}(x) = \max\{ 0 , x_j\}$. Then, the fully connected ReLU neural network function with $N+1$ layers ( $N-1$ hidden layers) is defined by the following function ($x$ is the input variable)
\begin{equation}\label{nnf}
f(W, b \; ; \; x) =  W^N  \sigma^{N-1} \big( . \; . \; .  \big( W^2 \sigma^1 (W^1  x  - b^1) - b^2\big) . \; . \; . \big)-b^N  
\end{equation}
where $W =(W^1,...,W^N )$ and $b=(b^1,...,b^N )$. By training a neural network, one means to find the weights $W^i$ and biases $b^i$ such that the function \eqref{nnf} approximately fits a given data set of pairs $(x,y)$. Indeed, by taking $\{ (x^i , y^i) \in \R^{n_0} \times \R^{n_{N}} \; | \; i=1,2,...M \}$ as a set of (training) data, the goal is to minimize the following \textit{loss} function 
\begin{equation}\label{l2los}
\mbox{minimize} \;\;  L(W, b) := \frac{1}{M}\; \sum_{i=1}^{M} \|f(W,b  \; ; \; x^i) \; - \; y^i\|^2  \quad \mbox{over all}\;\;   W  \;, \; b
\end{equation}}
\end{Example}

\begin{Example}[\bf bilevel programming]\label{bilevel}{\rm In this example, we consider a bilevel programming in the following form:
\begin{equation*}\label{bp}
(BP) \quad \quad \mbox{minimize} \;\;  \varphi (x,y)  \quad \mbox{subject to}\;\;    y \in S(x)
\end{equation*}
where, for any given $x$, $S(x)$ denotes the solution set of the lower-level program
\begin{equation*}\label{px}
(P_x) \quad \quad \mbox{minimize} \;\;  \psi (x,y)  \quad \mbox{subject to}\;\;   h(x,y) \leq 0
\end{equation*}
and $\varphi, \psi : \R^n \times \R^m \to \R$, $h:\R^n \times \R^m \to \R^k$ are continuously differentiable. Similar to \cite{by}, one can place more functional constraints in (BP). For simplicity, we do not add more constraints to (BP). A common approach to obtain optimality conditions and solving (BP) numerically, is to reformulate it into a single-level optimization problem, then applying the optimality condition on the latter problem. In the introduction of \cite{by}, there is a comprehensive discussion about different ways of single-level reformulation of (BP). In this paper, we study the value function approach proposed in Outrata \cite{o}. More precisely, in the sense of global minimizers, (BP) is equivalent to the following single-valued problem:
\begin{equation*}\label{v}
(VP) \quad \quad \mbox{minimize} \;\;  \varphi (x,y)  \quad \mbox{subject to}\;\;   \psi(x,y)- V(x) \leq 0, ~ h(x,y)\leq 0
\end{equation*}         
where, $V(x) := \inf \{ \psi(x,y)|\; h(x,y)\leq 0 \}$ is the optimal value function of the lower-level problem $P_x$. The value function $V$ is likely non-differentiable, causing problem (VP) not fitting into the amenable framework. However, under some reasonable assumptions, e.g. \cite[Theorem~5]{mins}, the value function is locally Lipschitz continuous and directionally differentiable, hence, it is semi-differentiable at the point of question.  
}
\end{Example}
 
\begin{Example}[\bf mathematical programming with complementary constraints]\label{complec}{\rm
Let $K , H : \R^n \to \R^m$ be continuously differentiable functions. Consider the optimization problem
\begin{equation}\label{complec1}
\mbox{minimize} \;\;  f(x)  \quad \mbox{subject to}\;\;    0 \geq K(x) \perp H(x)  \leq 0.
\end{equation}
It is well-know that, due to the complementarity constraint, the Mangasarian–Fromovitz constraint qualification never holds at any point of the feasible set; see \cite[Proposition~1.1]{jzz}. However, a suitable reformulation of the optimization problem \eqref{complec1} may guaranteee  a constraint qualification. Namely, the optimization problem \eqref{complec1} falls into framework \eqref{ccomp} by setting $G(x) := (H(x) , K(x))$ and  $$ X := \{ (y ,z) \in \R^{2k} \; | \; \la x ,y\ra = 0 ,~ x, y \in  \R^k_{-}   \} $$
This reformulation allows us to take advantage of reasonable constraint qualifications, e.g. metric subregularity, to not only drive a necessary optimality condition but also remove the constraint set by an exact penalty. It is worth mentioning that the latter reformulation of \eqref{complec1} is non-amenable, since $X$ is neither convex. More generally, if $C$ is a closed convex cone in the Euclidean space $E$,  which is equipped with the inner product $\la . \ra $, the following optimization problem falls into framework \eqref{comp} by similar reformulation
\begin{equation}\label{complec2}
\mbox{minimize} \;\;  f(x)  \quad \mbox{subject to}\;\;    C \ni  K(x) \perp H(x)  \in C.
\end{equation}
}
\end{Example}
\begin{Example}[\bf difference of amenable functions]\label{dame}{\rm
Another class of optimization problems that can follow the framework \eqref{comp} is the difference of convex functions, which are likely non-amenable if the concave part is non-smooth. Difference of convex functions can be considered as a sub-class of  difference of amenable functions, which are the functions written in the form
$$ f(x) := g_1 \circ F_1 (x)    -  g_2 \circ F_2 (x) $$   
where each $g_i$ is convex and each $F_i$ is continuously differentiable. Particularly, in \cite{m22}, we investigated the following class of optimization problems which is a far (non-convex) extension of difference of max of convex functions which was studied in \cite{pra}.}
\end{Example}  
\begin{Example}[\bf difference of max functions]\label{dmax}{\rm Let $\varphi : \R^n \to \R$ be an amenable function and $f_i : \R^n \to \R$ be differentiable functions for $i=1,2,...,m.$ Then, the following optimization problem falls into framework \eqref{comp}. 
\begin{equation}\label{dmax1}
\mbox{minimize} \;\;  \varphi(x) - \max \{ f_1 (x), f_2 (x), \;  ... \; , f_m (x) \}  \quad \mbox{subject to}\;\;    x \in \R^n
\end{equation}
The optimization problem \eqref{dmax1} can be equivalently written as 
\begin{equation}\label{dmax2}
\mbox{minimize} \;\;  \min \{ \varphi(x) - f_1 (x), \; ... \; , \varphi(x) - f_m (x) \}  \quad \mbox{subject to}\;\;    x \in \R^n .
\end{equation}
In many applications, $m$ is large ; see \cite{asp, pra} and references therein. In the latter case, the pointwise min produces many (subdifferentially) non-regular points, thus the Clarke subdifferential may be unnecessarily huge and provide misleading information.
}
\end{Example}

\begin{Example}[\bf spars optimization]\label{spars}{\rm Let $\O \subseteq \R^n$ bw non-empty set. Finding the sparsest vector in $A \O + b$ can be formulated by the following optimization problem 
\begin{equation}\label{spars1}
\mbox{minimize} \;\;  \| Ax + b \|_{0}  \quad \mbox{subject to}\;\;    x \in \O
\end{equation}
where $\| y \|_0$ counts the number of non-zero components of $y$.  If $\O$ is convex, then the typical approach to solve the optimization problem \eqref{spars1} is to replace $\| . \|_{0}$ with  $\| . \|_1$ as a surrogate function. However, the $\ell_1$ surrogate may not close enough to the actual $\| . \|_0$, thus other types of sparsities can be considered as well; see \cite{cp}.   
}
\end{Example}
\subsection{Related works}
In this sub-section, we briefly review previous related works for the composite problem \eqref{comp}. Then, we separately review theoretical and numerical results.  

\begin{itemize}
\item \textbf{Variational analysis of the optimization problem \eqref{comp}}
\end{itemize}
In all previous examples, either $g$ is non-convex, or $F$ is non-differentiable which itself may be in the form of compositions of many other non-differentiable functions. The latter irregularities cause a challenge to the study of variational analysis of the problem \eqref{comp} through the different notions of subdifferentials; see \cite{c, i, m18}. To the best of our knowledge, there is no result dealing with the composite optimization problems in such a general framework. The very recently published book \cite{cp} mostly concerns the composite functions $g \circ F$, where both $g$ and $F$ are locally Lipschitz continuous and directionally differentiable. The latter assumptions allow one to employ full limit instead of liminf/sup to define generalized directional derivatives. Consequently, the calculus rules are readily followed akin to the classical differentiable case. The price of such restrictions is to scarify the possibility of involving extended-real-valued functions. Due to this fact, authors in \cite{cp}, had to have a separate analysis of the computation of the tangent cones to constraint sets in various scenarios. Although from the view of exact penalization, Lipschitz continuous functions seem a sufficient ground to study first-order variational analysis of composite structures, the extended-real-valued functions are crucial for the study of second-order variational analysis of composite functions and constrained systems; for instance, the second subderivative of a Lipschitz continuous function may not be a finite-valued function; see \cite[Proposition~13.5]{rw}. The latter restrictions will also not allow one to have a unified analysis of amenable and non-amenable composite functions. Indeed, there is a comprehensive first and second-order order variational analysis on the composite problem \eqref{comp} where $g$ is convex and $F$ is continuously twice differentiable; see \cite{r88, r89, bp, rw, bs, ms, mms1, mms2}, we refer the reader to \cite{ash} for the latest and the strongest results in that topic.
\begin{itemize}
\item \textbf{Numerical methods for the optimization problem \eqref{comp}}
\end{itemize}
Most numerical methods concerning problem \eqref{comp} focus on the amenable cases, where $g$ is convex, and $F$ is smooth. The typical approach to treat amenable cases numerically is to linearize $F$ while keeping $g$ unchanged. The latter technique forces sub-problems for solving \eqref{comp} to become convex. Consequently, solving \eqref{comp} reduces to the solving of a sequence of convex problems. One typical case is the sum of a convex function and a smooth function. There is extensive literature around amenable cases; see e.g. \cite{bf, lw, zl, l, dp, ddmp, dil, r21}. Beyond amenability, the gradient sampling method may asymptotically find a Clarke stationary point for the non-smooth Lipschitz continuous functions \cite{blo, k} . Very recently in \cite{bl}, authors aimed to extend the convergence of the gradient sampling method to the directionally Lipschitz continuous functions. Quite recently, J. Zhang et. al \cite{zljsj} introduced a series of randomized first-order methods and analyzed their complexity in finding a $(\epsilon , \delta) -$ stationary point. The oracle of the algorithm in \cite{zljsj} needs to call a Clarke subgradient in each iteration. Both aforementioned methods are based on sampling (generalized) gradients, thus they do not reduce to the classical gradient descent when iterations happen on differentiable points. Very recently, the monograph \cite{cp} provides a systematic study of solving some classes of non-convex non-differentiable optimization problems by surrogate functions estimating the original function from above at each iteration. The subderivative method that we present in this paper can be viewed in the same line, minimizing an upper surrogate function for the optimization problem \eqref{comp}.     
\subsection{Preliminaries}
Throughout this paper, we mostly use the standard notations of variational analysis and generalized differentiation; see, e.g. \cite{m18,rw}. For a nonempty set $\Omega\subset X$, the notation $x\st{\O}{\to}\ox$ indicates that $x\to\ox$ with $x\in\O$, while $\co\O$ and $\cone\O$ stand for the convex and conic hulls of $\O$, respectively. The indicator function $\dd_\O$ of a set $\O$ is defined by $\dd_\O(x):=0$ for $x\in\O$ and $\dd_\O(x):=\infty$ otherwise, while the Euclidean distance between $x\in\X$ and $\O$ is denoted by $\dist(x;\O)$. Sometimes we need to use $\ell_1$-norm for the distance of two sets, in latter case, we again use the same term ''$\dist $", but we clarify the use of $\ell_1$ norm.  For a closed set $\O$, we  denote $\proj_{\O} (x)$, the Euclidean projection on the set $\O$. We write $x=o(t)$ with $x\in\X$ and $t\in\R_+$ indicating that $\frac{\|x\|}{t}\dn 0$ as $t\dn 0$, where $\R_+$ (resp.\ $\R_-$) means the collection of nonnegative (resp.\ nonpositive) real numbers. Recall also that $\N:=\{1,2,\ldots\}$.
Given a nonempty set $\O \subset \R^n $ with $\ox\in \O $, the  tangent cone $T_{\O}(\ox)$ to $\O$ at $\ox$ is defined by
\begin{equation*}\label{2.5}
T_{\O}(\ox)=\big\{w\in \R^n |\;\exists\,t_k{\dn}0,\;\;w_k\to w\;\;\mbox{ as }\;k\to\infty\;\;\mbox{with}\;\;\ox+t_kw_k\in  \O \big\}.
\end{equation*}
Given the extended-real-valued function $f:\R^n \to \oR:= (-\infty, \infty]$, its domain and epigraph are defined, respectively, by 
$$
\dom f =\big\{ x \in \R^n | \; f(x) < \infty \big \}\quad \mbox{and}\quad \epi f=\big \{(x,\al)\in X\times \R|\, f(x)\le \al\big \}.
$$
We say that $f \colon\R^n\to\oR$ is {\em piecewise linear-quadratic} (PLQ) if $\dom f =\cup_{i=1}^{s}\P_i$ with $\P_i$ being polyhedral convex sets for $i=1,\ldots,s$, and if $f$ has a representation of the form
\begin{equation}\label{PWLQ}
f(x)=\sm\langle A_ix,x\rangle+\langle a_i,x\rangle+\alpha_i\quad\mbox{for all}\quad x \in\O_i,
\end{equation}
where $A_i$ is an $n\times n$ symmetric matrix, $a_i\in\R^n$, and $\alpha_i\in\R$ for all $i=1,\ldots,s$. Further, the above function  $f$ is called {\em piecewise linear} (PL) if $A_i = 0$for all $i=1,\ldots,s$. Recall from \cite{mms1} that $f \colon\R^n\to\oR$ is {\em Lipschitz continuous} around $\ox\in\dom f$ {\em relative} to some set $\O\subset\dom f$ if there exist a constant $\ell\in\R_+$ and a neighborhood $U$ of $\ox$  such that
\begin{equation*}
| f(x)- f(u)|\le\ell\|x-u\|\quad\mbox{for all }\quad x,u\in\O\cap U.
\end{equation*}
 
The main advantage of defining Lipschitz continuity \textbf{relative} to a set, is to not missing the extended-real-valued functions for the study of constrained optimization. 
Piecewise linear-quadratic functions and indicator functions of nonempty sets are important examples of extended-real-valued functions that are Lipschitz continuous relative to their domains around any point $\ox\in\dom f$. Similarly we can define the calmness of $f$ \textbf{at} point $\ox$ relative to a set. In particular, $f$ is called calm at $\ox$ from below if there exist the $\ell > 0$ and a neighborhood $U$ of $\ox$ such that
\begin{equation*}
 f(x)- f(\ox) \geq  - \ell\|x- \ox\|\quad\mbox{for all }\quad x \in U.
\end{equation*}
If $f$ is Lipschitz continuous around $\ox$ relative to its domain, then $f$ is calm at $\ox$ from below. Given a function  $f:\R^n \to \oR$ and a point $\ox$ with $f(\ox)$ finite, the subderivative function $\d f(\ox)\colon\R^n\to[-\infty,\infty]$ is defined by \eqref{subderivatives}. Subderivative and tangent vectors are related by the relation $\epi \d f(\ox) = T_{\ss \epi f} (\ox , f(\ox))$. It is clear, from the latter tangent cone relationship, that $\d f(\ox)(.)$ is always lower semicontinuous, also $\d f(\ox)$ is proper if $f$ is calm at $\ox$ from below. Furthermore, We have $\dom \d f(\ox) = T_{\ss \dom f} (\ox )$ if $f$ is Lipschitz continuous around $\ox$ relative to $\dom f$, \cite[Lemma~4.2]{mm21}. If $f$ is convex or concave and $\ox \in \mbox{int} \; \dom f$ then \eqref{subderivatives} reduces to the usual directional derivative i.e,
\begin{equation}\label{dderivative}
{\mathrm d}f(\ox)(w)= f' (x ; w) : =\lim_{\substack{
   t\dn 0 \\
    }} {\frac{f(\ox+tw)-f(\ox)}{t}}.
\end{equation}
By a slight abuse of notation, we say that $f$ is directionally lower regular at $\ox \in \dom f$ if $f' (\ox ; w)$, as a number in $[- \infty , +\infty ]$, exists and it is equal to $\d f(\ox)(w)$. Beside the finite-valued-convex (concave) functions, there are many non-convex functions which are directionally lower regular, for instance, $f : = \delta_{X}$ where $X$ is a finite union of polyhedral convex sets,  $f$ is piecewise-linear quadratic, and $f(x)=\|x\|_0$, i.e., $\ell_0$-norm; see Example \ref{subzero}. Another example of such functions is the class of \textit{semi-differentiable} functions. Recall from \cite[Definition~7.20]{rw} that a vector-valued function $F:  \R^n \to \R^m$  is semi-differrentiable at $\ox$ for (direction) $w$, if the following limit exists 
\begin{equation}\label{semiderivatives}
{\mathrm d}F(\ox)(w):=\lim_{\substack{
   t\dn 0 \\
  w' \to w
  }} {\frac{F(\ox+tw')-F(\ox)}{t}}.
\end{equation}
$F$ is called semi-differentiable at $\ox$, if the above limit exists for all $w \in \R^n$, in this case, the continuous mapping $\d F(\ox) : \R^n \to \R^m$ is called the semi-derivative of $F$ at $\ox$. We say $F$ is semi-differentiable (without mentioning $\ox$) if $F$ is semi-differentiable everywhere in $\R^n$. As it is often clear from context, let us not use different notations for subderivative and semi-derivative; we use $\d$ for both them. Following \cite[Theorem~7.21]{rw} $F$ is semi-differentiable at $\ox$ if and only if there exists $h: \R^n \to \R^m$, continuous and homogeneous of degree one such that 
\begin{equation}\label{semidiff-ex}
F(x) = F(\ox) + h(x - \ox) + o (\| x - \ox \|),
\end{equation}
indeed  $h = \d F(\ox)$. It is easy to see that, for the locally Lipschitz continuous functions, the semi-differentiability and classical directional differentiability concepts are equivalent. In contrast with \cite[p.~156]{cp}, in the definition of semi-differentiability, we do not require $F$ to be locally Lipschitz continuous. However, one can observe that if $F$ is semi-differentiable at $\ox$, then $F$ is calm at $\ox$, that is there exists a $\ell > 0$ such that for all $x$ sufficiently close to $\ox$
$$   \| F(x)  -  F(\ox)\|  \leq \ell \; \| x -  \ox \|  .$$

\section{First-order variational analysis of non-amenable functions}\sce \label{sect02}
In this section, we study the first-order generalized directional derivative of the composite function $f: = g \circ F $ in problem \eqref{comp}. By developing a chain rule for subderivative, we calculate the subderivative of $f$. The obtained formula is applied to derive a tight first-order necessary optimality condition for problem \eqref{comp}. We first recall the definition of derivability from \cite[Definition~6.1]{rw} which plays an important role in our analysis of non-convex and non-Clarke regular constraint sets.
\begin{Definition}[\bf geometrically derivable sets] 
A tangent vector $w \in T_{X}(\ox)$ is said to be \textit{derivable}, if there exist $\varepsilon >0 $ and $\zeta : [0 , \varepsilon] \to X$ with $\zeta(0) = \ox$ and $\zeta'_{+} (0) = w$. The set $X$ is called geometrically derivable at $\ox$ if every tangent vector to $X$ at $\ox$ is derivable. We say $X$ is geometrically derivable if it is geometrically derivable at any $x \in X$.   
\end{Definition}
The class of geometrically derivable sets is considerably large. It is not difficult to check that the derivability is preserved under finite unions. Therefore, the finite union of convex sets is geometrically derivable as every convex set is geometrically derivable. A consequence of the latter result is that the non-convex polyhedron, graph, and epigraph of PL and PLQ functions are geometrically derivable. Furthermore, under a mild constraint qualification, the derivability is preserved under intersections and pre-image of smooth mappings \cite[Theorem~4.3]{mm21}. The derivability of $X$ at $\ox$ can equivalently be described by the relation $T_{X}(x) = \Lim_{t \dn 0} \frac{X - \ox}{t}$, where the limit means the set-limit in the sense of set-convergence \cite[p.~152]{rw}.\\
\begin{Definition}[\bf metric subregularity qualification condition] Let the composite function $f := g \circ F$ be finite at $\ox$, where $g : \R^m \to \oR$ and $F: \R^n \to \R^m  $. It is said that the metric subregularity qualification condition (MSQC) holds at $\ox$ for the composite function $f = g \circ F$ if there exist a $\kappa >0$ and $U$ an open neighborhood of $\ox$ such that for all $x \in U$ the following error bound holds :
\begin{equation}\label{msqc}
\dist ( x \; ; \; \dom f ) \leq \kappa \; \dist (F(x) \;  ; \; \dom g  ) 
\end{equation}
Similarly, we say the metric subregularity constraint qualification condition holds for the constraint set $\O  :=  \{ x \in \R^n | \;  G(x) \in X  \}$ at $\ox$ if MSQC holds for the composite function $\delta_{X} \circ G$, that is there exist a $\kappa > 0$ and $U$, an open neighborhood of $\ox$, such that for all $x \in U$ the following estimates holds:
\begin{equation}\label{msqccons}
\dist ( x \; ; \; \O ) \leq \kappa \; \dist (G(x) \;  ; \; X  )
\end{equation}
\end{Definition}
The above version of metric subregularity condition \eqref{msqc} for the composite function $g \circ F$ was first introduced in \cite[Definition~3.2]{mms1} by the author, which only concerns the domain of the functions $f$ and $g$, not their epigraphs as it was considered in \cite{io}. It is clear that the metric subregularity qualification condition is a robust property, in the sense that if \eqref{msqc} holds at $\ox$ then it also holds at any point sufficiently close to $\ox$. Also, observe that the metric subregularity qualification is invariant under changing of norms. The latter is due to the fact that norms are equivalent in finite dimensions. The metric subregularity condition \eqref{msqc} has shown itself a reasonable condition under which the first and second-order calculus rules hold for amenable setting both in finite and infinite-dimensional spaces; see \cite{mm21, mms1, ms, mms2, ash}. Metric subregularity constraint qualification is implied by Robinson constraint qualification. In particular, in nonlinear programming, it is implied by Mangasarian Fromovitz constraint qualification, and in convex programming, it is implied by Slater constraint qualification; for the comparison of various types of constraint qualifications see \cite[Proposition~3.1]{mms1} and \cite[Proposition~3.1]{mm21}. Next, we define a new metric subregularity qualification condition which is even weaker than \eqref{msqc}, and it turns out to be a more reasonable constraint qualification for the bilevel optimization problems; see Example \ref{bilevel}. 
\begin{Definition}[\bf directional metric subregularity]\label{dirmscq} Let the composite function $f := g \circ F$ be finite at $\ox$. Take $w \in \R^n$. We say the directional metric subregularity qualification condition holds at $\ox$ in direction $w$ if \eqref{msqc} holds restricted to the half-line $\{ \ox + tw | \; t\geq 0\}$, that is there exist a $\kappa = \kappa (\ox ,w) > 0$ and a $\epsilon=\varepsilon(\ox , w) >0$ such that for all $t \in [0 ,\varepsilon]$

\begin{equation}\label{dmsqc}
\dist ( \ox + t w \; ; \; \dom f ) \leq \kappa \; \dist (F(\ox + t w) \;  ; \; \dom g  ) 
\end{equation}
We say the directional metric subregularity condition holds $\ox$ (without specifying any direction) if \eqref{dmsqc} holds for all  for each $w \in \R^n$. Similarly, we can define the directional metric subregularity constraint qualification for the constraint set $\O  :=  \{ x \in \R^n | \;  G(x) \in X  \}$, i.e.,
\begin{equation}\label{dmsqccons}
\dist ( \ox+tw \; ; \; \O ) \leq \kappa \; \dist (G(\ox+tw) \;  ; \; X  )
\end{equation}
\end{Definition} 
The directional metric subregularity (along all directions) is weaker than (full) metric subregularity \eqref{msqc}, thus any sufficient condition for metric subregularity serves a sufficient condition for directional metric subregularity. If $\kappa $ and $\epsilon$ in \eqref{dmsqc} can be taken uniformly in $w$ then both metric subregularity and directional metric subregularity coincide. Besides sufficient conditions, the checking of \eqref{dmsqc} directly along all $w$ is easier than checking \eqref{msqc}, due to simplicity in dimension one. In  Definition \ref{dirmscq}, one can restrict direction $w \in \R^n$,  to $w$ with $\|w\|=1$.  If $w$ is feasible direction at point $\ox \in \O$, i.e., $\ox + t w \in \O$ for sufficiently small but positive $t$, then the directional metric subregularity \eqref{dmsqccons} holds with any constant $\kappa > 0 .$ Therefore, to checking \eqref{dmsqccons}, we only need to establish \eqref{dmsqccons} for infeasible directions. The directional metric subregularity \eqref{dmsqc} is also weaker than the directional metric subregularity introduced by Gfrerer \cite[Definition~2.1]{g} , which requires the estimate \eqref{msqc} holds for all $x$ belong to a \textbf{set} having the set $\{ \ox + t w | t \in ( 0 , \varepsilon  ]  \}$ within its interior. Therefore, all sufficient conditions for metric subregularity \eqref{msqc}, and the conditions in \cite[Theorem~4.3]{g} and \cite[Theorem~4.1]{byz} can serve as sufficient conditions for \eqref{dmsqc} and \eqref{dmsqccons}.  We prefer to keep the term \textit{directional} in definition \eqref{dirmscq}, since \eqref{dmsqc} is metric subregularity merely along direction $w$, not relative to a set which was considered in \cite[Definition~2.1]{g}.  
\subsection{Calculus rules via subderivative}

In the following theorem, we establish a subderivative chain rule for the composite function $g \circ F$ where $F$ is semi-differentiable. A non-direction version of the latter chain rule previously obtained by author in \cite{mms1} and \cite{ash}  under stronger assumptions; for example, in \cite[Theorem~3.4]{mms1}, $F$ was assumed to be differentiable, and the (full) metric subregularity \eqref{msqc} holds. The latter would allow us to replace $F$ by its first-order taylor expansion to reduce the problem to the problem of the chain rule for the composite function $g \circ A$, where $A$ is an linear map. However, a careful modification of the proof in \cite[Theorem~3.4]{mms1} can be still used here. We give a detailed proof for the sake of completeness.   
\begin{Theorem}[\bf subderivative chain rule of non-amenable functions]\label{fcalc}
Let $f: = g \circ F $ be finite at $\ox$ where $g: \R^m \to \oR$ is Lipschitz continuous relative to $\dom g$, and $F:\R^n \to \R^m $ is semi-differentiable at $\ox $ in direction $w$. Further, assume that the directional MSCQ \eqref{dmsqc} holds at $\ox$ for direction $w$. Then, the following chain rule holds:
\begin{equation}\label{fcalc1}
\d f(\ox) (w) = \d g ( F(\ox)) ( \d F(\ox) w)  
\end{equation}
\end{Theorem}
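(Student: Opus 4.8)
The plan is to establish the chain rule \eqref{fcalc1} by proving the two inequalities separately, since the subderivative is defined via a liminf and the equality will follow from matching upper and lower bounds. The overall strategy mirrors the non-directional argument in \cite[Theorem~3.4]{mms1}, but I must work along the single fixed direction $w$ and, crucially, I cannot replace $F$ by a linear map since $F$ is merely semi-differentiable rather than differentiable. Instead I would exploit the semi-differentiability characterization \eqref{semidiff-ex}: writing $F(\ox+tw') = F(\ox) + t\,\d F(\ox)(w') + o(t)$ along sequences $w'\to w$, $t\dn 0$, so that the inner displacement of $F$ is controlled by $\d F(\ox)w$ up to higher-order terms.

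\textbf{The easy inequality} ($\leq$). First I would prove $\d f(\ox)(w) \leq \d g(F(\ox))(\d F(\ox)w)$. Given any sequence realizing the liminf that defines $\d g(F(\ox))(v)$ with $v:=\d F(\ox)w$ — say $s_j\dn 0$ and $v_j\to v$ with $(g(F(\ox)+s_jv_j)-g(F(\ox)))/s_j \to \d g(F(\ox))(v)$ — the obstacle is that the perturbed points $F(\ox)+s_jv_j$ need not lie on the image of $F$. This is precisely where geometric derivability and the directional MSCQ enter. Using that $\dom g$ is approached derivably and that \eqref{dmsqc} bounds $\dist(\ox+tw;\dom f)$ by $\kappa\,\dist(F(\ox+tw);\dom g)$, I would construct a curve $t\mapsto x_t\in\dom f$ with $x_t = \ox + tw + o(t)$ whose image $F(x_t)$ tracks the minimizing sequence for $\d g(F(\ox))(v)$; the Lipschitz continuity of $g$ relative to $\dom g$ then absorbs the discrepancy $o(t)$ and transfers the bound to $\d f(\ox)(w)$.

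\textbf{The hard inequality} ($\geq$), which I expect to be the main obstacle. Here I take a sequence $t_k\dn 0$, $w_k\to w$ achieving the liminf for $\d f(\ox)(w)$, so $\ox+t_kw_k\in\dom f$ and $(f(\ox+t_kw_k)-f(\ox))/t_k \to \d f(\ox)(w)$. Setting $y_k := (F(\ox+t_kw_k)-F(\ox))/t_k$, semi-differentiability gives $y_k \to \d F(\ox)w =: v$ because $\ox+t_kw_k = \ox + t_k w + o(t_k)$ forces $w_k\to w$ to feed through the continuous, homogeneous map $\d F(\ox)$. Then
\begin{equation*}
\frac{f(\ox+t_kw_k)-f(\ox)}{t_k} = \frac{g(F(\ox)+t_k y_k)-g(F(\ox))}{t_k} \geq \d g(F(\ox))(v) - \varepsilon_k,
\end{equation*}
where the inequality follows from the definition of $\d g(F(\ox))(v)$ as a liminf over $t\dn 0$, $v'\to v$, applied with $t=t_k$ and $v'=y_k\to v$; taking $k\to\infty$ closes the argument. \textbf{The delicate point} is verifying $w_k\to w$ from $\ox+t_kw_k\in\dom f$ together with the defining liminf: the minimizing sequence $w_k$ is only known to converge to $w$ by construction of the liminf, and one must ensure the MSCQ-produced feasible points stay compatible with direction $w$ so that $y_k\to v$ genuinely holds rather than converging to some other cluster point. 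I would handle this by noting that calmness of $g$ from below (guaranteed by relative Lipschitz continuity) keeps $\d g(F(\ox))$ proper, so the liminf defining the right-hand side is attained as a genuine limit along $y_k$, and the directional MSCQ guarantees $\dom f$ contains enough points in direction $w$ that the liminf on the left is not spuriously $-\infty$.
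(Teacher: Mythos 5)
Your proposal follows the paper's proof essentially verbatim: the inequality $\d f(\ox)(w)\ge\d g(F(\ox))(\d F(\ox)w)$ comes from feeding the sequence $y_k=(F(\ox+t_kw_k)-F(\ox))/t_k\to\d F(\ox)w$ into the liminf defining $\d g(F(\ox))$, while the opposite inequality uses the directional MSQC to manufacture points $\ox+t_kw_k\in\dom f$ with $w_k\to w$ tracking a sequence realizing $\d g(F(\ox))(\d F(\ox)w)$, and the Lipschitz continuity of $g$ relative to $\dom g$ to absorb the resulting $o(t_k)$ discrepancy. Two cosmetic slips worth fixing: geometric derivability is neither a hypothesis nor needed anywhere (the MSQC estimate alone produces the feasible points), and your easy/hard labels are reversed --- the $\ge$ direction you call the main obstacle requires no constraint qualification at all, whereas the $\le$ direction is where both standing assumptions actually do their work.
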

\begin{proof} Define $\eta(w' ,t) := F(\ox+tw')-F(\ox)- t\d F(\ox)(w)$. Deduce from the semi-differentiability of $F$ at $\ox$ for $w$ that $\frac{\eta(w' ,t)}{t}\to 0$ as $t\dn 0$ and $w' \to w$. By using the function $\eta$ and definition \eqref{subderivatives}, we get the relationships
\begin{eqnarray*}
\disp\d(g \circ F)(\ox)(w)&=&\liminf_{\substack{t\dn 0\\w'\to w}}\frac{g\big(F(\ox+t w')\big)-g \big(F(\ox)\big)}{t}\nonumber\\
&=&\liminf_{\substack{t\dn 0\\w'\to w}}\frac{g\big(F(\ox)+t\d F(\ox)w + \eta(w' , t) \big)-g \big(F(\ox)\big)}{t}\nonumber\\
&=&\liminf_{\substack{t\dn 0\\w' \to w}}\frac{g\big(F(\ox)+t(\d  F(\ox)w+\frac{\eta(w',t))}{t})\big)-g \big(F(\ox)\big)}{t}\nonumber\\
&\ge&\d g \big(F(\ox)\big)\big(\d F(\ox) w \big)\;\mbox{ whenever }\; w\in\R^n,
\disp
\end{eqnarray*}
which verify the inequality ``$\ge$" in \eqref{fcalc1}. Proceeding next with the proof of the opposite inequality in \eqref{fcalc1}, if $\d g (F(\ox))(\d F(\ox)w)=\infty$, the latter inequality holds obviously. Also observe from the Lipschitz continuity of $g$ around $F(\ox)$ relative to its domain that $\d g (F(\ox))(\d F(\ox)w)>-\infty$. Therefore, we may assume that the value $\d g(F(\ox))(\d F(\ox)w)$ is finite, and thus there exist sequences $t_k\dn 0$ and $v_k \to \d F(\ox)w$ such that
\begin{equation}\label{subd0}
\d g \big(F(\ox)\big)\big(\d F(\ox)w\big)=\lim_{\substack{k\to\infty}}\frac{ g \big(F(\ox)+t_k v_k\big)- g \big(F(\ox)\big)}{t_k} \in \R.
\end{equation}
Suppose without loss of generality that $F(\ox)+t_k v_k\in\dom g$ for all $k\in\N$. Then, the imposed directional MSQC \eqref{dmsqc} at $\ox$ in direction $w$ yields
\begin{equation*}
{\rm dist}(\ox+t_kw \; ; \; \O)\le\kappa\,{\rm dist}\big(F(\ox+t_k w)\; ;\; \dom g \big),\quad k\in\N,
\end{equation*}
which in turn brings us to the relationships
\begin{eqnarray*}
{\rm dist}\Big(w \; ; \; \frac{\O-\ox}{t_k}\Big)&\le &\frac{\kappa}{t_k}\,{\rm dist}\big(F(\ox)+t_k\d F(\ox)w+ \eta(t_k , w) \; ; \; \dom g \big)\nonumber \\
&\le&\frac{\kappa}{t_k}\,\big\| F(\ox)+t_k\d F(\ox)w+\eta(t_k , w) -F(\ox)-t_k v_k\big\|\nonumber\\
&=&\kappa\,\Big\|\d F(\ox)w-v_k+\frac{\eta(t_k , w) }{t_k}\Big\|\;\mbox{ for all }\;k\in\N.
\disp
\end{eqnarray*}
It allows us to find vectors $w_k\in\frac{\O-\ox}{t_k}$ satisfying
$$
\|w-w_k\|\le \kappa\,\Big\|\d F(\ox)w-v_k+\frac{\eta(t_k , w) }{t_k}\Big\| + \frac{1}{k}
$$
and hence, telling us that $\ox+t_k w_k\in\O$ for all $k\in\N$ and that $w_k\to w$ as $k\to\infty$. Since $g$ is Lipschitz continuous relative to its domain around $F(\ox)$, there exists $\ell >0$ such that for $k \in \N$ sufficiently large we have
$$ g\big(F(\ox)+t_k v_k)\big)- g\big(F(\ox+t_k w_k)) \geq - \ell \; \| F(\ox+t_k w_k)-F(\ox)- t_k v_k \| .$$ 
Combining above with \eqref{subd0}, we arrive at the relationships 
\begin{eqnarray*}
\disp \d g\big(F(\ox)\big)\big(\d F(\ox)w\big)&=&\disp\lim_{k\to\infty}\Big[\frac{ g\big(F(\ox+t_k w_k)\big)- g \big(F(\ox)\big)}{t_k}+\frac{g\big(F(\ox)+t_k v_k)\big)- g\big(F(\ox+t_k w_k)\big)}{t_k}\Big]\\
&\ge& \disp\liminf_{k\to\infty}\frac{g \big(F(\ox+t_kw_k)\big)-g\big(F(\ox)\big)}{t_k}-\ell\limsup_{k\to\infty}\Big\|\frac{F(\ox+t_k w_k)-F(\ox)}{t_k}-v_k\Big\|\\
&\ge&\d(g\circ F)(\ox)(w)-\ell\limsup_{k\to\infty}\Big\| \d F(\ox)w +\frac{\eta(w_k , t_k)}{t_k}-v_k\Big\|=\d (g\circ F)(\ox)(w),\disp
\end{eqnarray*}
This verifies the inequality ``$\ge$" in \eqref{fcalc1} and completes the proof of the theorem.
\end{proof}
\begin{Example}[\bf subderivative of zero norm]\label{subzero}{\rm Let $A \in \R^{m \times n}$ be a matrix and $b \in \R^m$ be a column vector. For each $x \in \R^n,$ define $f(x) := \| Ax + b\|_{0}$, where $\|y\|_{0}$ counts the number of nonzero components of $y$. Then, $f$ is directionally lower regular at each $x \in \R^n$, and the subderivative of $f$ is calculated by
\begin{eqnarray*}
\disp
\d f(x)(w)  = \left\{\begin{matrix}
0 & \;S(Aw) \subseteq S(Ax + b) \\ 
 \infty & \; S(Aw) \nsubseteq S(Ax + b)
\end{matrix}\right. 
\disp
\end{eqnarray*}
where $S(y)$ denotes the support of $y \in \R^m$, that is the set of all $i \in \{1,2,...,n\}$ with $y_i \neq 0  $.
}
\end{Example}
\begin{proof}
Take $x, \;w \in \R^n$, and $W \subset \R^n$, a bounded neighborhood of $w$. It is not difficult to show that there exists a $\varepsilon >0$ such that for all $t \in (0 , \varepsilon)$ and all $w' \in W$ one has
$$ f(x+tw')= \| Ax +b +t Aw'  \|_{0} = \|Ax + b \|_{0}  +  \mbox{Card}\; \big( S(Aw') \setminus S(Ax + b)  \big), $$
where ''Card(A)" stands for the number of elements in the set $A$. It is not difficult to check that $S(Aw) \subseteq S(Aw')$, for all $w'$ sufficiently close to $w$. The latter implies that
$$f(x+tw) \leq f(x+tw') \quad \mbox{for all}~ t\in (0,\varepsilon) $$
Therefore, for all $t \in \in (0,\varepsilon)$ and $w'$ close to $w$ we have 
\begin{equation*}
\frac{f(x+tw) - f(x)}{t} \leq  \frac{f(x+tw') - f(x)}{t} = \frac{\mbox{Card}\; \big( S(Aw') \setminus S(Ax + b)  \big)}{t} 
\end{equation*}
Hence,
\begin{eqnarray*}
\disp
\d f(x)(w) &=& \liminf_{t \dn 0} \; \frac{f(x+tw) - f(x)}{t} \\\nonumber 
&=& \lim_{t \dn 0} \; \frac{\mbox{Card}\; \big( S(Aw) \setminus S(Ax + b)  \big)}{t} = \left\{\begin{matrix}
0 & \;S(Aw) \subseteq S(Ax + b) \\ 
 \infty & \; S(Aw) \nsubseteq S(Ax + b)
\end{matrix}\right. 
\disp
\end{eqnarray*}
The above relationships show that $f$ is directionally lower regular. 
\end{proof}
In Theorem \ref{fcalc}, if further  $g$ is semi-differentiable at $F(\ox)$ for the direction $\d F(\ox)(w)$,  then  $g \circ F$ is semi-differentiable at $\ox$ for the direction $w$. In particular, if both $F$ and $g$ are semi-differentiable then so is $g \circ F$ and thus it is directionally lower regular. Note that in the latter case, the proof can get significantly simpler as one deal with (full) \textit{limit} instead of \textit{liminf }, similar to the clasical differentiable chain rule. Moreover, in the semi-differentiable case, due to the existence of the full limit, we can take $g$ a vector-valued function and drop the relative Lipschitz continuity assumption on $g$. For the sake of completeness, we give a short proof for the semi-differentiable chain rule, as it seems to be absent in the literature.       
\begin{Proposition}[\bf chain rule for semi-differentiable functions]\label{semidiff-chain}
Let $F: \R^n \to \R^m$ be semi-differentiable at $\ox$ for direction $w$ and $G: \R^m \to \R^p$ be semi-differentiable at $F(\ox)$ for the direction $\d F(\ox)(w)$ then $G \circ F$ is semi-differentiable at $\ox$ for the direction $w$ with
\begin{equation}\label{semidiff-chain1}
\d (G \circ F)(\ox) (w) = \d G(F(\ox)) (\d F(\ox)w).
\end{equation}
In particular, $G \circ F$ is semi-differentiable at $\ox$ if $F$ and $G$ are semi-differentiable at $\ox$ and $F(\ox)$ respectively. 
\end{Proposition}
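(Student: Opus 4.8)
The plan is to work directly with the full-limit definition \eqref{semiderivatives}, exploiting the feature that distinguishes semi-differentiability from mere directional differentiability: the direction variable is allowed to move while the step shrinks. Set $\oy := F(\ox)$ and $v := \d F(\ox)(w)$. First I would record a first-order expansion of the inner map. Introducing the remainder $\eta(w',t) := F(\ox + tw') - F(\ox) - t\,\d F(\ox)(w)$, the semi-differentiability of $F$ at $\ox$ for $w$ gives $\eta(w',t)/t \to 0$ as $t \dn 0$ and $w' \to w$, so that
\begin{equation*}
F(\ox + t w') = \oy + t\Big( v + \tfrac{\eta(w',t)}{t}\Big).
\end{equation*}

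Next I would rewrite the difference quotient of the composition in a form to which the hypothesis on $G$ applies. Defining the perturbed direction $v'(w',t) := v + \eta(w',t)/t$, the previous display reads $F(\ox+tw') = \oy + t\,v'(w',t)$ with $v'(w',t) \to v$ as $t \dn 0$, $w' \to w$, whence
\begin{equation*}
\frac{(G \circ F)(\ox + t w') - (G \circ F)(\ox)}{t} = \frac{G\big(\oy + t\, v'(w',t)\big) - G(\oy)}{t}.
\end{equation*}

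The heart of the argument---and the one place where the full-limit nature of semi-differentiability is indispensable---is to pass to the limit on the right-hand side. By hypothesis $G$ is semi-differentiable at $\oy$ for the direction $v = \d F(\ox)(w)$, which means $\frac{G(\oy + s u) - G(\oy)}{s} \to \d G(\oy)(v)$ for \emph{every} joint manner of letting $s \dn 0$ and $u \to v$. Since here $s := t \dn 0$ and $u := v'(w',t) \to v$, a routine $\ve$-$\delta$ argument---pick the neighborhood furnished by the semi-differentiability of $G$, then use $v'(w',t) \to v$ to guarantee that the pair $(t, v'(w',t))$ lands inside it for all small $t$ and all $w'$ near $w$---shows that the composite quotient converges to $\d G(\oy)(v)$. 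This proves simultaneously that the full limit defining $\d(G\circ F)(\ox)(w)$ exists and that it equals $\d G(F(\ox))(\d F(\ox)w)$, which is \eqref{semidiff-chain1}. I expect this to be the main, and essentially the only, subtlety: were $G$ merely directionally differentiable rather than semi-differentiable, the direction-perturbation $v'(w',t)$ could destroy convergence, so the assumption on $G$ must be invoked in full strength.

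Finally, the ``in particular'' assertion follows at once. If $F$ and $G$ are semi-differentiable at $\ox$ and $\oy$ respectively, then for \emph{each} $w \in \R^n$ the map $F$ is semi-differentiable at $\ox$ for $w$ while $G$ is semi-differentiable at $\oy = F(\ox)$ for $\d F(\ox)(w)$, so \eqref{semidiff-chain1} holds in every direction. Hence $G \circ F$ is semi-differentiable at $\ox$ in every direction, and by \cite[Theorem~7.21]{rw} its semi-derivative $\d(G\circ F)(\ox) = \d G(F(\ox)) \circ \d F(\ox)$ is continuous and homogeneous of degree one, as required.
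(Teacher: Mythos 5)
Your proposal is correct and follows essentially the same route as the paper's proof: you introduce the identical remainder $\eta(w',t) := F(\ox+tw')-F(\ox)-t\,\d F(\ox)(w)$, rewrite the composite difference quotient as a quotient of $G$ along the perturbed direction $\d F(\ox)w + \eta(w',t)/t \to \d F(\ox)w$, and invoke the full-limit semi-differentiability of $G$ at $F(\ox)$ in that direction to pass to the limit. The extra detail you give on the $\ve$-$\delta$ justification and on the ``in particular'' statement is consistent with, and slightly more explicit than, what the paper writes.
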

\begin{proof}
Define $\eta(w' ,t) := F(\ox+tw')-F(\ox)- t\d F(\ox)(w)$. Deduce from the semi-differentiability of $F$ at $\ox$ for $w$ that $\frac{\eta(w' ,t)}{t}\to 0$ as $t\dn 0$ and $w' \to w$.  Following relationships hold
\begin{eqnarray*}
	\disp\d(G \circ F)(\ox)(w)&=&\lim_{\substack{t\dn 0\\w'\to w}}\frac{G\big(F(\ox+t w')\big)-G \big(F(\ox)\big)}{t}\nonumber\\
	&=&\lim_{\substack{t\dn 0\\w'\to w}}\frac{G\big(F(\ox)+t\d F(\ox)w + \eta(w' , t) \big)-G \big(F(\ox)\big)}{t}\nonumber\\
	&=&\lim_{\substack{t\dn 0\\w' \to w}}\frac{G\big(F(\ox)+t(\d  F(\ox)w+\frac{\eta(w',t))}{t})\big)-G\big(F(\ox)\big)}{t}\nonumber\\
	&=&\d G \big(F(\ox)\big)\big(\d F(\ox) w \big),
	\disp
\end{eqnarray*}
where the last equality holds due to the semi-differentiability of $G$ at $F(\ox)$ for the direction $\d F(\ox)(w).$
\end{proof}
In contrast with subdifferential chain rules, both chain rules \eqref{fcalc1} and \eqref{semidiff-chain1} hold in the form of equality. These major pros allow us to apply these chain rules over the arbitrary number of functions composed of each other. The latter is particularly useful to compute the subderivative of the loss function of a deep neural network in \eqref{l2los}. In this regard, we have the following corollary for which we omit its proof. 
\begin{Corollary}[\bf auto-differentiation via semi-derivative]
Let $F_i$, $i=1,...k$ be  semi-differentiable functions on $\R^n$. Let $x , w \in \R^n$.  The the semi-derivative of the composition $F_k \circ F_{k-1} \circ ... F_{1}$ can be calculated through the following recursive way:
\begin{equation*}
\left\{\begin{matrix}
x_0 := x  \\ 
 x_i :=  F_{i}(x_{i-1})&  i=1,...,k \\
  u_i (.) :=  \d F_{i}(x_{i-1})(.)&  i=1,...,k 
\end{matrix}\right.
\end{equation*}
The semi-derivative of the above composite function  is $u_k$, i.e.,
$$  \d \big( F_k \circ F_{k-1} \circ ... F_{1} \big)(x)(.) =  u_k(.) $$
\end{Corollary}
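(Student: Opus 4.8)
The plan is to prove the statement by a finite induction on the number of layers $k$, using the semi-differentiable chain rule of Proposition~\ref{semidiff-chain} as the only engine. It is convenient to first read the recursion in its accumulated form: setting $u_0(w):=w$ (the identity map), the natural reading of the defining recursion is $u_i(w)=\d F_i(x_{i-1})\big(u_{i-1}(w)\big)$, so that $u_i$ is the semi-derivative at $x$ of the partial composition $G_i:=F_i\circ F_{i-1}\circ\cdots\circ F_1$. With this reading, the forward recursion $x_i:=F_i(x_{i-1})$ simply records the points $x_i=G_i(x)$ at which the successive semi-derivatives must be evaluated, and the claim becomes $\d G_k(x)(w)=u_k(w)$.

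First I would dispose of the base case $i=1$: here $G_1=F_1$ and $x_0=x$, so $\d G_1(x)(w)=\d F_1(x)(w)=u_1(w)$ directly from the definition, while semi-differentiability of $G_1$ at $x$ is precisely the hypothesis on $F_1$. For the inductive step, suppose that $G_{i-1}$ is semi-differentiable at $x$ with $\d G_{i-1}(x)=u_{i-1}$ and that $G_{i-1}(x)=x_{i-1}$. Writing $G_i=F_i\circ G_{i-1}$, I would invoke Proposition~\ref{semidiff-chain} with outer map $F_i$ and inner map $G_{i-1}$: since $F_i$ is semi-differentiable at $G_{i-1}(x)=x_{i-1}$ (indeed everywhere, hence in particular for the direction $\d G_{i-1}(x)(w)=u_{i-1}(w)$), the proposition yields both that $G_i$ is semi-differentiable at $x$ and that
\[
\d G_i(x)(w)=\d F_i\big(x_{i-1}\big)\big(\d G_{i-1}(x)(w)\big)=\d F_i(x_{i-1})\big(u_{i-1}(w)\big)=u_i(w).
\]
Moreover $G_i(x)=F_i(G_{i-1}(x))=F_i(x_{i-1})=x_i$, which propagates the forward-pass identity to the next level and closes the induction. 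Taking $i=k$ gives the desired conclusion.

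The only point requiring care is the bookkeeping rather than any analysis: one must verify at each level that the evaluation point produced by the forward pass, $x_{i-1}$, is exactly $G_{i-1}(x)$, so that the outer semi-derivative is taken at the correct argument; this is precisely the auxiliary identity $G_i(x)=x_i$ carried along in the induction. Note that because each $F_i$ is assumed semi-differentiable \emph{everywhere}, and not merely in a single prescribed direction, the direction-dependent hypothesis of Proposition~\ref{semidiff-chain} is satisfied automatically for whatever direction $u_{i-1}(w)$ the inner map feeds forward; hence no compatibility condition on directions needs to be tracked and no liminf/limsup argument enters. Consequently the result is a clean iterate of the single-step chain rule, which is why its proof may safely be omitted in the text.
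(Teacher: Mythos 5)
Your induction is correct and is exactly the argument the paper intends (the text omits the proof precisely because it is an iterate of Proposition~\ref{semidiff-chain}); your reading of the recursion in accumulated form, $u_i(w)=\d F_i(x_{i-1})(u_{i-1}(w))$, is the right interpretation, and the bookkeeping identity $G_i(x)=x_i$ is the only thing that needs checking. Nothing is missing.
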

\begin{Proposition}[\bf semi-differentiability of the distance functions]\label{semidist}{\rm Let $X$ be a nonempty closed set. Set $f(x):= \dist (x ; X)$. Then, the subderivative of $f$ is computed by
\begin{equation}\label{subdist}
\d f(x)(w) =\left\{\begin{matrix}
\dist (w \;  , \;  T_{X} (x)) & x \in X   \\\\  
   \min \{ \frac{\la x - y \;  , \; w\ra}{f(x)} \big|  \;y \in \proj_{X}(x) \}&   x \notin X 
\end{matrix}\right.
\end{equation}
Furthermore, $f$ is semi-differentiable if $X$ is geometrically derivable. 
}
\end{Proposition}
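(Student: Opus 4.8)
The plan is to split into the two regimes $x\in X$ and $x\notin X$, since the distance function has a fundamentally different character in each. Consider first $x\in X$, so that $f(x)=0$. I would begin with the homogeneity identity obtained by writing each $y\in X$ as $y=x+tz$ with $z\in\frac{X-x}{t}$ and factoring out $t$, namely
\[
 f(x+tw')=\dist(x+tw';X)=t\,\dist\!\Big(w';\tfrac{X-x}{t}\Big),
\]
so that $\frac{f(x+tw')}{t}=\dist(w';\frac{X-x}{t})$. Since $v\mapsto\dist(v;C)$ is $1$-Lipschitz with constant independent of the set $C$, the perturbation $w'\to w$ in the liminf defining $\d f(x)(w)$ is harmless, giving $\d f(x)(w)=\liminf_{t\dn 0}\dist(w;\frac{X-x}{t})$. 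The remaining step is the general identity $\liminf_{t\dn 0}\dist(w;\frac{X-x}{t})=\dist(w;T_X(x))$, which I would derive from the outer-limit description $T_X(x)=\Limsup_{t\dn 0}\frac{X-x}{t}$: the inequality ``$\le$'' follows by approximating an arbitrary point of $T_X(x)$ along a sequence realizing that tangent vector, while ``$\ge$'' follows by taking $t_k$ that achieve the liminf (finite since $0\in T_X(x)$), choosing near-projections $v_k\in\frac{X-x}{t_k}$ (which exist and are bounded because $X$ is closed and nonempty), and extracting a convergent subsequence whose limit lies in $T_X(x)$. This yields the first branch of \eqref{subdist}.

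For $x\notin X$, so $f(x)>0$, the point is that $f$ is a marginal (min) function with a smooth, locally compactly-indexed integrand. I would localize by replacing $X$ with the compact set $X_0:=X\cap\overline{B}(x,f(x)+1)$, which satisfies $\proj_{X}(x')=\proj_{X_0}(x')$ and $f(x')=\min_{y\in X_0}\|x'-y\|$ for all $x'$ near $x$; moreover $\|x'-y\|$ stays bounded away from $0$ there, so $(x',y)\mapsto\|x'-y\|$ is $C^1$ in $x'$ with gradient $\frac{x'-y}{\|x'-y\|}$. A Danskin-type marginal-function theorem over the compact index set $X_0$ then gives directional differentiability with $f'(x;w)=\min_{y\in\proj_X(x)}\big\langle \tfrac{x-y}{f(x)},w\big\rangle$, using that $\|x-y\|=f(x)$ at every minimizer $y$. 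Because $f$ is globally $1$-Lipschitz, the same Lipschitz collapse of $w'\to w$ as above gives $\d f(x)(w)=f'(x;w)$, which is the second branch of \eqref{subdist}.

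Finally, for semi-differentiability under geometric derivability: at $x\notin X$ the Danskin formula already produces the full directional limit, and the Lipschitz property makes this equivalent to semi-differentiability, so nothing more is required there. The genuine content is at $x\in X$, where $\d f(x)(w)=\liminf_{t\dn 0}\dist(w;\frac{X-x}{t})$ is only a liminf. Here I would invoke geometric derivability in the equivalent form $\frac{X-x}{t}\to T_X(x)$ (Painlev\'e--Kuratowski set convergence), which is the same as locally uniform convergence of the associated distance functions; combined with the $1$-Lipschitz bound in the $w'$-variable, this upgrades the liminf to a genuine limit equal to $\dist(w;T_X(x))$, establishing semi-differentiability. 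I expect the main obstacle to be the case $x\in X$: both the general identity expressing the liminf as the distance to the tangent cone (via the outer-limit compactness argument) and the passage from liminf to full limit (which needs derivability to force the inner and outer limits of $\frac{X-x}{t}$ to coincide). The case $x\notin X$ should be routine once the marginal-function machinery is correctly localized.
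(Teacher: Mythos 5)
Your argument is correct, but it follows a partly different route from the paper's. The paper does not prove the formula \eqref{subdist} at all --- it simply cites \cite[Example~8.53]{rw} for it --- and devotes its proof entirely to the semi-differentiability claim; you prove both. For $x\in X$ your treatment (the homogeneity identity $f(x+tw')=t\,\dist (w'; \frac{X-x}{t})$, the $1$-Lipschitz collapse of $w'\to w$, the identification of $\liminf_{t\dn 0}\dist (w;\frac{X-x}{t})$ with $\dist (w;T_X(x))$ via the outer-limit description of the tangent cone, and the upgrade to a full limit using $T_X(x)=\Lim_{t\dn 0}\frac{X-x}{t}$ under geometric derivability) coincides in substance with the paper's, which invokes \cite[Corollary~4.7]{rw} for the same interchange of set limits and distance functions. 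The genuine divergence is at $x\notin X$: you localize to the compact set $X_0=X\cap\overline{B}(x,f(x)+1)$ and apply a Danskin-type marginal-function theorem to obtain full directional differentiability together with the stated formula, whereas the paper takes \eqref{subdist} as given, picks a minimizer $\oy\in\proj_{X}(\ox)$ attaining it, and uses the elementary majorization $f(\ox+tw)\le\|\ox+tw-\oy\|=:\eta(t)$ with $\eta$ differentiable at $0$ to bound the limsup of the difference quotients by $\eta'(0)=\d f(\ox)(w)$. Your Danskin route is self-contained and delivers the subderivative formula and the directional derivative in one stroke, at the cost of importing (and correctly localizing) the marginal-function machinery; the paper's majorization trick is shorter but leans on the cited formula. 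Both are sound, and both correctly observe that no derivability is needed off $X$, since the global Lipschitz continuity of the distance function makes directional differentiability and semi-differentiability equivalent there.
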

\begin{proof}
The expression for  the subderivative of $f$ in \eqref{subdist} was obtained in \cite[Example~8.53]{rw}. We only give a proof to the second part. Fix $\ox , w \in \R^n$. Since $f$ is (globally) Lipschitz continuous, it suffices to show that
$$ \d f(\ox)(w) = \limsup_{t \dn 0} \frac{f(\ox + t w) - f(\ox)}{t}.  $$
According to the definition of subderivative we always have $\leq$. To prove the opposite inequality, first,  we assume $\ox \in X$, thus $f(\ox) = 0$. Since $X$ is geometrically derivable, we have $$ T_{X}(\ox) =\Lim_{t \dn 0}\frac{X - \ox}{t}.$$ By the connection of limits and distance functions in \cite[Corollary~4.7]{rw},  we can write 
\begin{eqnarray*}
\disp
\d f(\ox)(w) &=& \dist (w \;  ; \;  T_{X} (\ox)) =  \dist \big( w \; ; \; \Lim_{t \dn 0}\frac{X - \ox}{t} \big) \\\nonumber
&=& \lim_{t \dn 0} \dist \big( w \; ; \; \frac{X - \ox}{t} \big) = \lim_{t \dn 0} \frac{f(x + t w) - f(x)}{t}.
\disp
\end{eqnarray*}
Now we suppose $\ox \notin X$. By \eqref{subdist}, we can find $\oy \in \proj_{X} (\ox)$ such that $\d f(\ox)(w) =  \frac{\la \ox - \oy \;  , \; w\ra}{f(\ox)} $. Define $\eta (t):= \|\ox + tw - \oy \|$. The function $\eta$ is differentiable at $t = 0$, because $\ox \neq \oy$. Since $f(\ox + tw) \leq \| \ox + tw - \oy \|$ for all $t \geq 0$, we have 
$$ \frac{f(\ox + t w) - f(\ox)}{t} \leq \frac{\eta(t) - \eta(0)}{t} \quad \quad t>0 $$ 
Hence
$$ \limsup_{t \dn 0} \frac{f(\ox + t w) - f(\ox)}{t} \leq \eta' (0) = \frac{\la \ox - \oy \;  , \; w\ra}{\| \ox -\oy \|} = \d f(\ox) (w) $$
which proves the semi-differentiability of $f$ at $\ox$. 
\end{proof}
\begin{Remark}{\rm
		In the light of Theorem \ref{fcalc} and Proposition \ref{semidist} the  the penalized function associated with the constraint $G(x) \in X$ is semi-differentiable if $X$ is geometrically derivable and $G$ is semi-differentiable. More precisely, in the constrained optimization \eqref{ccomp}, if $\varphi := g \circ F$ and $G$ are semi-differentiable at $x$, and $X$ is geometrically derivable at $G(x)$, then the function $ f := \varphi + \rho \; \dist (G(.) ; X)$ is semi-differentiable, consequently, it is directionally lower regular at $x$. These conditions are general enough to cover interesting classes of non-amenable functions, such as examples mentioned in section \ref{sect01}. We will see that directional lower regularity, in particular, semi-differentiability of $f$, is a key to establish first-order descent methods for the penalized problems.}
\end{Remark}
An immediate consequence of a chain rule is sum rule that can be obtained by Theorem \ref{fcalc}. A non-directional version of this sum rule had been previously obtained in \cite[Theorem~4.4]{mm21}. 
\begin{Corollary}[\bf sum rule via subderivative]\label{fsum}
Let functions $f,  h : \R^n \to \oR$ be finite at $\ox$. Let $f$ and $h$ be Lipschitz continuous around $\ox$ relative to their domains. Furthermore, assume that the metric subregularity condition holds at $\ox$ in direction $w$ for $\dom f \cap \dom g$ that is, there exist a $\kappa = \kappa(\ox , w) > 0 $ and a $\varepsilon = \varepsilon (\ox , w) >0$ such that for all $t \in [0 , \epsilon]$, the following estimate holds: 
\begin{equation}\label{summsqc}
\dist  (\ox + tw  \; ; \; \dom f \cap \dom h ) \leq \kappa \; \big( \dist  (\ox + tw \ ; \dom f)  +  \dist (\ox +tw \; ; \; \dom  h) \big).
\end{equation}
Then, the subderivative sum rule holds at $\ox$ for the direction $w$:	 
\begin{equation}\label{fsum1}
\d (f+h)(\ox)(w) =  \d f(\ox)(w) + \d h(\ox)(w) 
\end{equation}

\end{Corollary}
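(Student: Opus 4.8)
The plan is to read the sum $f+h$ as a composition and invoke the chain rule of Theorem~\ref{fcalc}. Define the linear map $F:\R^n\to\R^{2n}$ by $F(x):=(x,x)$ and the separable outer function $g:\R^{2n}\to\oR$ by $g(y,z):=f(y)+h(z)$. Then $g\circ F=f+h$, and $\dom(g\circ F)=\{x:(x,x)\in\dom f\times\dom h\}=\dom f\cap\dom h$. Being linear, $F$ is semi-differentiable at $\ox$ in every direction with $\d F(\ox)w=(w,w)$, so the inner-map hypothesis of Theorem~\ref{fcalc} is free.

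First I would check the remaining hypotheses of Theorem~\ref{fcalc} for $g\circ F$. Lipschitz continuity of $g$ relative to $\dom g=\dom f\times\dom h$ is immediate from that of $f$ and $h$, since $|g(y,z)-g(y',z')|\le\ell_f\|y-y'\|+\ell_h\|z-z'\|$ on $(\dom f\times\dom h)\cap U$. For the directional MSCQ~\eqref{dmsqc} applied to $g\circ F$, I would equip $\R^{2n}$ with the $\ell_1$-norm; since metric subregularity is invariant under a change of norm (as noted after Definition~\ref{dirmscq}), this is harmless, and then $\dist\big((\ox+tw,\ox+tw);\dom f\times\dom h\big)=\dist(\ox+tw;\dom f)+\dist(\ox+tw;\dom h)$. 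Hence hypothesis~\eqref{summsqc} is exactly~\eqref{dmsqc} for $g\circ F$, and Theorem~\ref{fcalc} yields $\d(f+h)(\ox)(w)=\d g\big((\ox,\ox)\big)\big((w,w)\big)$.

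It then remains to prove the separable identity $\d g\big((\ox,\ox)\big)\big((w,w)\big)=\d f(\ox)(w)+\d h(\ox)(w)$, and this is the step I expect to be the crux. The inequality ``$\ge$'' is routine: writing the defining $\liminf$ of $\d g$ as a $\liminf$ of a sum and using superadditivity of $\liminf$ together with the fact that the two summands depend on the two separate blocks of the argument gives $\d g((\ox,\ox))((w,w))\ge\d f(\ox)(w)+\d h(\ox)(w)$. The hard direction is ``$\le$'': I must produce a \emph{single} sequence $t_k\dn0$ and directions $v_k,u_k\to w$ along which $[f(\ox+t_kv_k)-f(\ox)]/t_k\to\d f(\ox)(w)$ and $[h(\ox+t_ku_k)-h(\ox)]/t_k\to\d h(\ox)(w)$ simultaneously. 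Here I would exploit that, in $\d g$, the two blocks of $(w,w)$ may be perturbed independently (in contrast with $\d(f+h)$, where a common $x$ is forced), choosing $v_k$ and $u_k$ separately to realize each one-dimensional subderivative, while the Lipschitz-relative-to-domain hypothesis controls the residual so that the perturbations $w'\to w$ cost only $o(1)$. Reconciling the two realizing sequences along a \emph{common} vanishing $t_k$ is precisely where the Lipschitz/calmness structure must do the work, and it is the part of the argument that deserves the most care.
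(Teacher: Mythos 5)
Your reduction is exactly the paper's: the same inner map $F(x)=(x,x)$, the same separable outer function $g(y,z)=f(y)+h(z)$, the same identification of \eqref{summsqc} with the directional MSQC \eqref{dmsqc} via the $\ell_1$-norm, and the same appeal to Theorem~\ref{fcalc}. Where you go beyond the paper is in isolating the separable identity $\d g\big((\ox,\ox)\big)\big((w,w)\big)=\d f(\ox)(w)+\d h(\ox)(w)$ as a step that still requires proof; the paper's own argument passes over it in silence. Your instinct that the ``$\le$'' half of that identity is the dangerous part is correct, and unfortunately the difficulty you flag cannot be resolved by the Lipschitz-relative-to-domain hypothesis. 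The two block variables $v,u$ in the liminf defining $\d g$ may indeed be perturbed independently, but they share the \emph{same} scalar $t$, and the sequences $t_k$ realizing the two one-dimensional liminfs need not be compatible. Concretely, take $n=1$, $\ox=0$, $w=1$, $f(x)=x\sin(\ln|x|)$ with $f(0)=0$, and $h=-f$. Both functions are Lipschitz continuous near the origin with full domain, so every hypothesis of the corollary holds trivially, yet $\d f(0)(1)=\d h(0)(1)=-1$ while $\d (f+h)(0)(1)=\d g\big((0,0)\big)\big((1,1)\big)=0$: since $\ln(tv)-\ln(tu)=\ln(v/u)\to 0$ as $v,u\to 1$, the independent perturbations cannot decouple the two oscillations. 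Thus the inequality ``$\le$'' in \eqref{fsum1} --- and with it the corollary as stated --- fails; the chain rule of Theorem~\ref{fcalc} itself is not at fault, only the splitting of $\d g$.

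The step can be closed, and your proof completed, if one of the summands is directionally lower regular (in particular, semi-differentiable) at $\ox$ for the direction $w$: then its difference quotient converges as a full limit along \emph{every} admissible sequence $t\dn 0$, $w'\to w$, the liminf of the sum splits, and your argument for ``$\le$'' goes through by choosing the realizing sequence for the remaining summand. This is consistent with the remark following the corollary in the paper, which invokes the sum rule precisely under such additional regularity. As a standalone statement, however, the corollary needs that extra hypothesis, and your proposal --- to its credit --- stalls exactly at the point where the paper's proof is incomplete.
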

\begin{proof}
Define $g: \R^n \times \R^n \to \oR$ with $g(x ,y):= f(x) + h(y)$ and  $F: \R^n \to \R^n \times \R^n$ with $F (x) = (x ,x)$. It is easy to check that the estimate \eqref{summsqc} reduces to the directional metric subregularity \eqref{dmsqc} for the composite function $g \circ F$ at $\ox$ for direction $w$. By applying chain rule \eqref{fcalc1} on previous composite function, we arrive at \eqref{fsum1}.
\end{proof}
Note that if one of the functions $f$ or $h$ be finite-valued, then \eqref{summsqc} holds automatically for all direction $w \in \R^n$. In particular, if one the functions $f$ or $h$ is semi-differentiable at $\ox$,  then the sum rule \eqref{fsum1} holds for all direction $w \in \R^n$ without assuming relative Lipschitz continuity. The both chain rule \eqref{semidiff-chain1} and sum rule \eqref{fsum1} will lead us to the calculation of tangent vectors of constraint sets,  which is a key to (primal) first-order necessary optimality condition of the problem \eqref{ccomp}.
\begin{Corollary}[\bf calculation of tangent vectors to the constraint sets]\label{calctan} Let $\ox \in \O:= \{x \big|\; G(x) \in X \}$ where $G: \R^n \to \R^m$ is semi-differentiable at $\ox$ for all directions $w \in C:= \{ w \big|\;   \d G(\ox)(w) \in T_{X} (G(\ox))  \}$, and the directional metric subregularity constraint qualification \eqref{dmsqccons} holds at $\ox$ for all such directions, then
\begin{equation}\label{calctan1}
T_{\O}(\ox) = \{ w \in \R^n \big| \; \d G(\ox)(w) \in T_{X} (G(\ox))   \}
\end{equation} 

\end{Corollary}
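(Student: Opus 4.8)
The plan is to reduce the statement to the subderivative chain rule of Theorem \ref{fcalc} applied to indicator functions. Observe that $\O=\dom(\delta_X\circ G)$ and, more precisely, $\delta_\O=\delta_X\circ G$, since $\delta_X(G(x))=0$ when $G(x)\in X$ and $+\infty$ otherwise. Because $\delta_\O$ is Lipschitz continuous relative to its domain $\O$ (indicator functions are such examples, as recorded in the preliminaries), the domain formula for subderivatives gives $\dom\d\delta_\O(\ox)=T_{\dom\delta_\O}(\ox)=T_\O(\ox)$. Hence it suffices to show $\dom\d\delta_\O(\ox)=C$, i.e. that $\d\delta_\O(\ox)(w)$ is finite exactly when $w\in C$. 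I would also record the elementary identity $\d\delta_X(G(\ox))(v)=\delta_{T_X(G(\ox))}(v)$, which follows either from $\epi\d\delta_X(G(\ox))=T_{\epi\delta_X}(G(\ox),0)=T_{X\times\R_+}(G(\ox),0)=T_X(G(\ox))\times\R_+$, or directly from the definition of the subderivative of an indicator (the liminf is $0$ along a tangent direction and $+\infty$ off the tangent cone).

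Next I would apply Theorem \ref{fcalc} with $g:=\delta_X$ and $F:=G$. Its hypotheses hold for every $w\in C$: the function $\delta_X$ is Lipschitz continuous relative to $\dom\delta_X=X$, the map $G$ is semi-differentiable at $\ox$ in direction $w$, and the directional MSQC \eqref{dmsqc} for $\delta_X\circ G$ is exactly the directional metric subregularity constraint qualification \eqref{dmsqccons} assumed at $\ox$. The chain rule then yields, for $w\in C$, $\d\delta_\O(\ox)(w)=\d\delta_X(G(\ox))(\d G(\ox)w)=\delta_{T_X(G(\ox))}(\d G(\ox)w)=0$, because $\d G(\ox)w\in T_X(G(\ox))$ by the definition of $C$. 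Consequently $w\in\dom\d\delta_\O(\ox)=T_\O(\ox)$, which establishes $C\subseteq T_\O(\ox)$.

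For the reverse inclusion I would invoke only the easy inequality ``$\ge$'' in Theorem \ref{fcalc}, which holds in any direction where $G$ is semi-differentiable and needs no constraint qualification. Fix $w\in T_\O(\ox)$; then $\d\delta_\O(\ox)(w)=0$. That inequality gives $0=\d\delta_\O(\ox)(w)\ge\d\delta_X(G(\ox))(\d G(\ox)w)=\delta_{T_X(G(\ox))}(\d G(\ox)w)$, so the right-hand side is $\le 0$, hence equal to $0$, forcing $\d G(\ox)w\in T_X(G(\ox))$, i.e. $w\in C$. Equivalently, one may argue directly: from $w\in T_\O(\ox)$ pick $t_k\dn 0$ and $w_k\to w$ with $G(\ox+t_kw_k)\in X$; semi-differentiability gives $\tfrac{1}{t_k}\big(G(\ox+t_kw_k)-G(\ox)\big)\to\d G(\ox)w$, and since each quotient lies in $\tfrac{1}{t_k}\big(X-G(\ox)\big)$, the limit lies in $T_X(G(\ox))$. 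Combining the two inclusions yields \eqref{calctan1}.

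The main obstacle is the inclusion $C\subseteq T_\O(\ox)$, the direction that recovers genuine tangent vectors of $\O$ from tangent vectors of $X$: this is precisely where the directional metric subregularity \eqref{dmsqccons} enters, through the ``$\le$'' half of Theorem \ref{fcalc}, and it cannot be obtained from semi-differentiability alone. A secondary point to watch is the scope of the semi-differentiability hypothesis, since the argument for $T_\O(\ox)\subseteq C$ also needs $\d G(\ox)w$ to be defined for $w\in T_\O(\ox)$; I would therefore read the hypothesis as ``$G$ is semi-differentiable at $\ox$'', which is in any case what makes the set $C$ meaningful.
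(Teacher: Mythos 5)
Your proposal is correct and follows essentially the same route as the paper: identify $\delta_\O=\delta_X\circ G$, apply Theorem \ref{fcalc} to get $\d\delta_\O(\ox)(w)=\delta_{T_X(G(\ox))}(\d G(\ox)w)=0$ for $w\in C$ and hence $C\subseteq\dom\d\delta_\O(\ox)=T_\O(\ox)$, with the reverse inclusion holding without any constraint qualification. Your write-up is in fact more careful than the paper's one-line argument, since you explicitly justify the easy inclusion $T_\O(\ox)\subseteq C$ and correctly flag that this step already requires $\d G(\ox)w$ to exist for tangent directions $w$ not yet known to lie in $C$.
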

\begin{proof}
Observe that we always have $T_{\O}(\ox) \subseteq \{ w \in \R^n \big| \; \d G(\ox)(w) \in T_{X} (G(\ox))   \} = C$ without assuming any constraint qualification. To show the opposite inclusion, pick $ w \in C$. By chain rule \eqref{fcalc1}, we have $$\d \delta_{\O}(\ox)(w) = \d \delta_{C} ( G(\ox)) (\d G(\ox) w) = 0$$ which forces $w \in \dom \d \delta_{\O}(\ox) = T_{\O}(\ox).$ 
\end{proof}

\begin{Proposition}[\bf first-order necessary optimality condition]\label{fonc}
Let $\ox$ be  a local optimal solution for problem \eqref{ccomp}, where $X$ is a closed set and $g$ is a locally Lipschitz continuous function. Let $C \subseteq \R^n$ be a nonempty set. Assume that $F$ and $G$ are  semi-differentiable at $\ox$ respectively for all $w \in C$. Finally suppose that the directional metric subregularity constraint qualification \eqref{dmsqccons} holds at $\ox$ for all $ w \in C$. Then, 
\begin{equation}\label{fonc1}
 \d g ( F(\ox)) ( \d F(\ox)w ) \geq 0 \quad \quad \text{for all} ~ w \in C ~ \mbox{with}~ \d G(\ox) w \in T_{X}(G(\ox)).
\end{equation}
\end{Proposition}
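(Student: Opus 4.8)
The plan is to convert the constrained problem into an unconstrained minimization and then feed it into the chain rule of Theorem \ref{fcalc}. Set $\hat{g}\colon\R^m\times\R^k\to\oR$ by $\hat{g}(y,z):=g(y)+\delta_X(z)$ and $\hat{F}\colon\R^n\to\R^m\times\R^k$ by $\hat{F}(x):=(F(x),G(x))$, so that the objective together with the constraint is encoded by $\phi:=\hat{g}\circ\hat{F}=(g\circ F)+\delta_X\circ G$, exactly the lifting used to pass from \eqref{comp} to \eqref{ccomp}. Since $\phi(x)=f(x)$ for $x\in\O:=\{x\,|\,G(x)\in X\}$ and $\phi(x)=+\infty$ otherwise, local optimality of $\ox$ for \eqref{ccomp} is exactly unconstrained local optimality of $\ox$ for $\phi$.

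First I would record that an unconstrained local minimizer forces the subderivative to be nonnegative in every direction: for $t\dn 0$ and $w'\to w$ the point $\ox+tw'$ eventually lies in a neighborhood on which $\phi\ge\phi(\ox)$, so each difference quotient $\frac{\phi(\ox+tw')-\phi(\ox)}{t}$ is nonnegative (being $+\infty$ whenever $\ox+tw'\notin\O$), whence $\d\phi(\ox)(w)\ge 0$ for every $w\in\R^n$. This is the crux of the argument and the reason for passing to $\phi$: minimality of $f$ over $\O$ alone does not bound $\d f(\ox)(w)$, because the defining $\liminf$ ranges over perturbations $w'$ that may leave $\O$; the indicator term suppresses exactly those perturbations by assigning them the value $+\infty$.

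Next I would compute $\d\phi(\ox)(w)$ through the chain rule. Fix $w\in C$ with $\d G(\ox)w\in T_X(G(\ox))$. The hypotheses of Theorem \ref{fcalc} hold for $\phi=\hat{g}\circ\hat{F}$: the map $\hat{F}=(F,G)$ is semi-differentiable at $\ox$ for $w$ with $\d\hat{F}(\ox)w=(\d F(\ox)w,\d G(\ox)w)$; the outer function $\hat{g}$ is Lipschitz continuous relative to $\dom\hat{g}=\R^m\times X$ since there it coincides with the locally Lipschitz $g$; and the directional MSQC \eqref{dmsqc} for $\hat{g}\circ\hat{F}$ reduces precisely to the assumed \eqref{dmsqccons}, because $\dom\phi=\O$ and $\dist(\hat{F}(\ox+tw);\dom\hat{g})=\dist(G(\ox+tw);X)$ near $\ox$. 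Theorem \ref{fcalc} then yields $\d\phi(\ox)(w)=\d\hat{g}(F(\ox),G(\ox))(\d F(\ox)w,\d G(\ox)w)$.

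Finally I would split the separable outer subderivative. Writing $\hat{g}=p+q$ with $p(y,z):=g(y)$ finite-valued and $q(y,z):=\delta_X(z)$, the sum rule of Corollary \ref{fsum} applies with no additional qualification (since $p$ is finite-valued, \eqref{summsqc} is automatic), giving $\d\hat{g}(F(\ox),G(\ox))(\d F(\ox)w,\d G(\ox)w)=\d g(F(\ox))(\d F(\ox)w)+\d\delta_X(G(\ox))(\d G(\ox)w)$. The last term equals $\delta_{T_X(G(\ox))}(\d G(\ox)w)$, which is $0$ precisely because $\d G(\ox)w\in T_X(G(\ox))$. Combining this with $\d\phi(\ox)(w)\ge 0$ delivers $\d g(F(\ox))(\d F(\ox)w)\ge 0$, which is \eqref{fonc1}. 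The only delicate points are the bookkeeping identifying \eqref{dmsqc} for the lifted composite with \eqref{dmsqccons}, and confirming that the $z$-independence of $p$ (together with local Lipschitzness of $g$) makes $\d p$ agree with $\d g$; both are routine but should be stated explicitly.
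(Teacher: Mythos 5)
Your argument is correct and follows essentially the same route as the paper: pass to the essential function $f=g\circ F+\delta_X\circ G$, use that a local minimizer forces $\d f(\ox)(w)\ge 0$ for all $w$, and then decompose the subderivative via the chain rule of Theorem \ref{fcalc} and the sum rule of Corollary \ref{fsum}. The only difference is cosmetic ordering — you apply the chain rule to the lifted composite $\hat g\circ\hat F$ first and then split the separable outer function, whereas the paper splits the sum first and applies the chain rule to each summand — and your extra care in checking that the lifted directional MSQC reduces to \eqref{dmsqccons} is exactly the bookkeeping the paper leaves implicit.
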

\begin{proof}
Let $f : \R^n \to \oR$ be the essential function of the optimization problem \eqref{ccomp},  i.e., $f = g \circ F + \delta_{X} \circ G$. Since $\ox$ is a local minimizer of the problem $\min_{x \in \R^n} ~ f(x) ,$ we have $\d f (\ox)(w) \geq 0$ for all $w \in \R^n . $ On the other hand, by the sum rule \eqref{fsum1} and the chain rule \eqref{fcalc1}, for all $w \in C$ we have 
\begin{eqnarray*}
\disp
\d f(\ox) (w) &=& \d (g \circ F) (\ox)(w)  + \d (\delta_{X} \circ  G)(\ox)(w) \\\nonumber
&=& \d g (F(\ox)) (\d F(\ox)w) + \d \delta_X (G(\ox))(\d G(\ox)w)\\\nonumber
&=& \d g (F(\ox)) (\d F(\ox)w) + \delta_{\ss T_{X}(G(\ox))} (\d G(\ox)w )
\disp
\end{eqnarray*} 
This verifies \eqref{fcalc1}.
\end{proof}
The bigger set $C$ is, the sharper the necessary condition \eqref{fonc1} is. Particularly, we are interested in the case $C = \R^n .$ Since we alwas have $T_{\O} \subseteq \{w| \; \d G(\ox)(w) \in T_{X}(G(\ox))\}$, the set $C$ does not need to be larger than $\{w| \; \d G(\ox)(w) \in T_{X}(G(\ox))\}$. Hence, one does need to worry about semi-differentiability or directional metric subregularity in all directions, as any differentiability property constraint qualification comes with a cost in the bilevel programming; see Example \ref{obilevel}. Motivated by the latter observation, we have the following definition of stationarity relative to a given set. 
\begin{Definition}[\bf directional stationary points relative to sets]\label{ds} We say that a feasible point $\ox$ for problem \eqref{ccomp} is  a \textit{(d)irectional-stationary} point relative to the set $C$, if it satisfies the first-order necessary condition \eqref{fonc1}. The feasible point $\ox$ is called a d-stationary point (without mentioning any set), if it is a d-stationary point relative to the set $C:=\{w| \; \d G(\ox)(w) \in T_{X}(G(\ox))\}$.
\end{Definition}
It is well-known that the d-stationary points are the closest candidates to the local minimizers among other stationary points, this can be illustrated in a ReLU neural network function; indeed, for the function $f(x) =  - \max \{0 , x \}$, the point $\ox =0$ is identified as a Clarke- even a  limiting-stationary point, whereas, it is not a d-stationary point which correctly rejects its optimal candidacy. To compare the different types of stationary points we refer the reader to \cite[Proposition~1]{cps}. When an optimization problem lacks a constriant qualification or direffrentiability on its datas, e.g. bilevel programming  and opimization with complimentarity constraints,  the d-stationarity relative to smaller sets can be used. To illustrait the latter situation, in the following example and remark, we compute the d-stationary points of the bilevel programming formulated in Example \ref{bilevel}.    

\begin{Example}[\bf optimality condition of bilevel programming]\label{obilevel}{\rm Let $(\ox , \oy) \in \R^n \times \R^m$, and let $D \subset \R^n $ be a nonempty set. In the setting of the Example \ref{bilevel}, assume that the value function $V$ is semi-differentiable for all directions $w \in D .$ If $(\ox , \oy)$ solves the problem (BP), it solves (VP) as well. Furthermore, we assume that in (VP) the directional metric subregularity CQ \eqref{dmsqccons} holds at $\ox$ for the all directions $(w , u) \in D \times \R^m $. Theforefore, by Proposition \ref{fonc}, we have $(\ox , \oy)$ is a $d$-stationary point for (VP) relative to the set $D \times \R^m$ that is 
$$ \nabla \varphi (\ox , \oy) (w , u) \geq 0  $$
whenever $(w , u) \in D \times \R^m$ and
\begin{equation}\label{obilevel1}
\la  \nabla \psi (\ox , \oy) \; , \;  (w , u) \ra  - V' (\ox ; w) \leq 0, \quad \la \nabla h(\ox , \oy) \; , \; (w,u) \ra \leq 0
\end{equation}
In the above, we implicitly assume $h(\ox , \oy) =0$, otherwise we could neglect the constraint $h(x , y) \leq 0 $, thus drop the related inequality in \eqref{obilevel1}. For the calculation of $V' (\ox ; w)$ in terms of the initial dates $h$ and $\psi$, one can see e.g., \cite[Proposition~4.3]{by} or \cite[Theorem~30]{mins}. 
}
\end{Example}
\begin{Remark}[\bf comparison with other results]{\rm In Example \ref{obilevel}, we only require $V$ to be semi-differentiable at $\ox$ for directions $w \in D$. The latter is implied by directional differentiability and \textit{directional Lipschitz continuity}, defined in \cite{by}, of the value function $V$; see \cite[Theorem~4.1]{by} and \cite[Proposition~4.3]{by}. The optimality condition obtained in Example \ref{obilevel} is in a primal version which is generally sharper than a dual version obtained by a subdifferential. In this paper, we only stick to the primal argument. However, one can obtain a dual optimality condition for the constrained problem \eqref{ccomp} and Example \ref{obilevel} by adapting a suitable directional subdifferential. Alternatively, under the assumption of the existence of critical direction made in \cite[Theorem~5.1]{by}, we can derive a dual-directional optimality condition. Indeed, in Example \ref{obilevel}, if $w \in D $ and there exists $v \in \R^m$ such that $(w , v)$ satisfying \eqref{obilevel1} with $ \nabla \varphi (\ox , \oy) (w , v) \leq 0 $, then our directional optimality condition tells us that the following linear programming (in variable $u$) has the optimal solution $v$ with the optimal value $- \la \nabla_{x} \varphi (\ox , \oy) , w \ra$:

\begin{equation}\label{lp}
\mbox{minimize} \;\;   \la \nabla_{y} \varphi (\ox , \oy) , u \ra\quad \mbox{subject to} \quad  u ~ \mbox{satisfies} ~ \eqref{obilevel1}
\end{equation}
Now one by taking a dual of the above linear programming will arrive to a dual-version of the optimality condition for the bilevel problem \eqref{bilevel}. Indeed the dual of problem \eqref{lp} is
\begin{eqnarray}\label{dlp}
\disp
&\mbox{maximize}& - \lambda_1 V'(\ox ; w) + \lambda_1 \la \nabla_{x} \psi (\ox  , \oy) , w  \ra + \lambda_2 \la \nabla_{x} h (\ox  , \oy) , w  \ra \\\nonumber
&\mbox{subject to}& \lambda_1 \nabla_{y} \psi (\ox  , \oy) + \lambda_2 \nabla_{y} h (\ox  , \oy)= - \nabla_{y} \varphi(\ox , \oy) ,\\\nonumber
&& \lambda_1  \geq 0 , \lambda_2 \geq 0
\end{eqnarray}
Thanks to the strong duality argument in linear programming, problems \eqref{lp} and \eqref{dlp} have the same optimal value which is $- \la \nabla_{x} \varphi (\ox , \oy) , w \ra$. Therefore, the dual necessary optimality condition for the bilevel problem \eqref{bilevel} in direction $w \in D$ amounts to existence of $\lambda_1 , \lambda_2 \in \R$ such that
\begin{equation}\label{sipeqop1}
\left\{\begin{matrix}
\disp  \lambda_1 V'(\ox ; w) - \lambda_1 \la \nabla_{x} \psi (\ox  , \oy) , w  \ra - \lambda_2 \la \nabla_{x} h (\ox  , \oy) , w  \ra =  \la \nabla_{x} \varphi (\ox , \oy) , w \ra ,\\\\
\lambda_1 \nabla_{y} \psi (\ox  , \oy) + \lambda_2 \nabla_{y} h (\ox  , \oy)= - \nabla_{y} \varphi(\ox , \oy),\\\\
\lambda_1  \geq 0 , \lambda_2 \geq 0.
\end{matrix}\right.
\end{equation}
However, in this paper,  we are not dealing with dual optimality conditions as we believe that in non-amenable structures dual elements might not carry helpful information. To the best of our knowledge the primal optimality condition obtained in Example \ref{obilevel}, firstly, is the sharpest optimality condition for a general bilevel problem, secondly, it requires minimal assumptions compared to the other similar results obtained in the literature, in particular, the ones in the very recent paper \cite[Theorems~ 5.2, 5,3]{by}.   
}
\end{Remark}

\section{Subderivative Method}\sce \label{sec03}
In the sequel, we focus on the following unconstrained optimization problem 
\begin{equation}\label{uncons}
\mbox{minimize} \;\;  f(x) \quad \mbox{subject to}\;\; x \in \R^n ,
\end{equation}
where $f: \R^n \to \oR $ is calm at all $x \in \R^n$ from below. We are going to propose a first-order algorithm to obtain a d-stationary point for problem \eqref{uncons} up to some positive tolerance $\varepsilon > 0$.  To make the latter precise,  fix $\varepsilon > 0 $. We say $\ox$ is a $\varepsilon$-d(irectional) stationary point of problem \eqref{uncons} if  $\d f(\ox)(w) \geq - \epsilon$ for all $w \in \R^n$ with $\|w\|  \leq 1$.  Since we do not require $f$ takes only finite values, we can incorporate the constraint set into objective funtion of \eqref{uncons} with indicator function. However, from computation purpose, it is better we  incorporate constraint sets  into objective function by a finite exact penalty. More precisly, in the constrained optimization problem  
\begin{equation}\label{conso}
	\mbox{minimize} \;\;  \varphi(x) \quad \mbox{subject to}\;\;  G(x) \in X ,
\end{equation}
the constraint set can be incorprated into objective function by the penalty term $\rho \; \dist (G(x) \; ; \; X)$ where $\rho > 0$ is the penalty constant. Therefore, setting $f(x) := \varphi(x) + \rho \; \dist (G(x) \; ; \; X)$ in \eqref{uncons} we can cover constrained optimization problems.       
\vspace{.5cm}
 \begin{table}[H]
        \renewcommand{\arraystretch}{1}
        \centering
        \label{alg1}
        \begin{tabular}{|
                p{0.9\textwidth} |}
            \hline
            \textbf{Subderivative Method}\\
 
            \begin{enumerate}[label=(\alph*)]
            
                \item[0.] \textbf{(Initialization)} Pick the tolerance $\epsilon \geq 0$, starting point $x_0 \in \R^n$, and set $k=0 .$
           
                \item[1.] \textbf{(Termination)} If $\min_{\|w\| \leq 1}  \d f(x_k)(w) \geq  - \varepsilon $ then Stop.
                \item[2.] \textbf{(Direction Search)} Pick $w_k \in \mbox{arg min}_{\|w\| \leq 1 }   \d f(x_k)(w).$
                \item[3.] \textbf{(Line Search)} Choose the step size $\alpha_{k}  > 0$ through a line search method.
                \item[4.] \textbf{(Update)} Set $x_{k+1} : = x_k + \alpha_k  w_k$ and $k+1  \leftarrow k$ then go to step 1. 
            \end{enumerate}
            \\ \hline
        \end{tabular}
    \end{table}
\vspace{.3cm}

\begin{Remark}[\bf comments on the subderivative method]{\rm

\begin{itemize}
\item[]
\item[\textbullet] The subderivative method does not require any more assumptions on $f$ in order to be well-defined. Indeed, for any $f: \R^n \to \oR$ calm at each $x \in \R^n $ from below, the subderivative $\d f(x)(.)$ is proper and lower semicontinuous for each $x \in \R^n$. Therefore, the minimization  sub-problem in step 2 admits a solution.
\item[\textbullet] The rich calculus of the subderivative established in section \ref{sect02} eases the computation of $\d f$ for most functions, in particular, the non-amenable composition.  These calculus are not available for other primal tools such as Clarke directional derivatives. The latter is the prime reason we assign and specify the name   ``subderivative method" over ``a generalized gradient method". 
\item[\textbullet] The minimization sub-problem in step 2 does not need to be solved exactly, we only need to find an $w_k$ with $\d f(x_k)(w_k) < - \varepsilon .$ If $f$ is Clarke regular at $x_k$, then the sub-problems with subderivative method are convex programmings, indeed, in this case $\d f(x_k)(.)$ is a sub-linear function. In another case, if $f$ is differentiable at $x_k$ (not necessarly Clarke regular) the solution of the sub-problem in step 2 is $w_k =  - \frac{\nabla f(x_k)}{\| \nabla f(x_k) \|}$, and step 1 is simply checking if $\| \nabla f(x_k) \| \leq \varepsilon $. Luckily, we mostly deal with locally Lipschitz continuous functions; hence, it is likely that on many iterations we just have $w_k =  - \frac{\nabla f(x_k)}{\| \nabla f(x_k) \|}$, as Lipschitz continuous functions are differentiable almost everywhere by the Rademacher theorem.

\item[\textbullet] If $f$ is semi-differentiable, as it is in the all examples in section \ref{sect01}, then the subderivative method becomes a descent method; see Lemma \ref{backt}. In particular, in the constrained optimization \eqref{conso}, the penalized function $f(x) := \varphi (x) + \rho \; \dist(G(x) , X)$ is semi-differentiable, if $X$ is derivable, and $G$ and $\varphi$ are semi-differentiable; See Proposition \ref{semidiff-chain} and  Example \ref{semidist}. Hence, the subderivative method applied on $\varphi (x) + \rho \; \dist(G(x) , X)$ is a descent method.     
\end{itemize}  
}    
\end{Remark}    
\subsection{Convergence Analysis}

In order to establish the convergence of the subderivative method, we need that the generated directions (eventually) decrease the objective function sufficiently with a step size under consideration. Therefore, we propose the following condition.
\begin{Definition}[\bf descent property]\label{def_descent_pro}
Let $f : \R^n \to \oR$ and $\O \subseteq \R^n$. We say $f$ has a descent property on $\O$ if there exists a $L:=L(f , \O) \geq 0$ such that for all $x , y \in \O$ we have
\begin{equation}\label{descet_pro}
f(y) \leq f(x) + \d f(x) (y-x) + \frac{L}{2}  \| y -x  \|^2
\end{equation} 
\end{Definition}     
In the above definition, we do not require $f$ to be differentiable, not even Lipschitz continuous. If $f$ is continuously differentiable and its derivative is Lipschitz continuous, then \eqref{descet_pro} holds on $\O = \R^n$. The latter is known as the \textit{descent lemma}; see \cite[Lemma~4.22]{b}. An important class of non-amenable functions that enjoys the above descent property is the class of non-smooth concave functions. Furthermore, it is not difficult to verify that the descent property is preserved under addition and non-negative scalar multiplication. The composite function $f = g \circ F$ satisfies descent property \eqref{descet_pro}, if $g$ is concave, and $F$ is $L-$smooth function, i.e. $\nabla F(.)$ is Lipschitz continuous with a constant $L$. A striking observation is that the Moreau envelope of any bounded-below function enjoys the descent property \eqref{descet_pro} without assuming convexity or weak convexity. More precisely, if $f : \R^n \to \oR$ is prox-bounded with a constant $c > 0$, i.e., $f + \frac{1}{2c} \| . \|^2$ is bounded below \cite[Definition~1.23]{rw} , then $e_{r} f(x) := \inf_{y\in \R^n}\{ \frac{1}{2r} \|  x - y \|^2  + f(y) \}$ satisfies the descent property for all $r \in ( 0 , c )$.  We gather the above observations in the following proposition.
\begin{Proposition}[\bf sufficient conditions for the descent property]\label{sufdes}
Let $f, \; g: \R^n \to \oR$, $\O \subseteq \R^n$, and $F: \R^n \to \R^m$. Then, the following assertions hold
\begin{itemize}
\item[1.] if $f$ is $L$-smooth, then $f$ has the descent property  \eqref{descet_pro} on $\R^n$.
\item[2.] if $\lambda > 0$, and $f$ satisfies the descent property \eqref{descet_pro} on $\O$, then $\lambda f$ has the descent property on $\O$.
\item[3.] if $f$ and $g$ satisfy the descent propert on $\O$ so is $f + g$. 
\item[4.] if $g$ is a finite-valued concave function, then $g$ satisfies the descent property on $\R^n$ with the constant $L = 0$.
\item[5.]if $\O$ is a convex-compact set,  $F$ is $L$-smooth on $\O$,  and  $g$ is a finite-valued concave function, then $g \circ F$ has the descent property on $\O$ with a constant $L$.
\item[6.] if $f$ is prox-bouded with a constant $c$, then for all $r \in (0, c), $ $e_{r} f$ has the descent property with a constant $L = \frac{1}{r}$. 
\item [7.] if $\O$ is a convex-compact set,  $F$ is $L$-smooth on $\O$,  and  $g$  is prox-bounded with a constant $c$, then for each $r \in (0 , c)$, $[e_r g] \circ F$ has the descent property on $\O$ with a constant $\frac{L}{r} .$ 
\item [8.] Let $g_i : \R^{m_i} \to \oR $ be convex functions, and let $F_i: \R^{n} \to \R^{m_i}$ be $L$-smooth functions for each $i=1,2$, then for each $r \in (0 , \infty)$,  $f:= [\e_r g_1 ] \circ F_1   -   g_2  \circ F_2 $ has the descent property on $\R^n$  with a constant $\frac{L(1+r)}{r}.$  
\end{itemize}	
\end{Proposition}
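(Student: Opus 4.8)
The plan is to treat items 1--4 as elementary building blocks and then assemble items 5--8 from them, from the chain rule of Theorem \ref{fcalc}, and from one structural identity for the Moreau envelope. First I would record three facts that are immediate from definition \eqref{subderivatives}: for $\lambda>0$ one has $\d(\lambda f)(x)(w)=\lambda\,\d f(x)(w)$; for any two functions $\d(f+g)(x)(w)\ge\d f(x)(w)+\d g(x)(w)$, since the liminf of a sum dominates the sum of the liminfs; and for differentiable $f$, $\d f(x)(w)=\la\nabla f(x),w\ra$. Item 1 is then the classical descent lemma \cite[Lemma~4.22]{b}. Item 2 follows by multiplying \eqref{descet_pro} by $\lambda$, with constant $\lambda L$. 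Item 3 follows by adding the two descent inequalities and replacing $\d f(x)(y-x)+\d g(x)(y-x)$ by the larger quantity $\d(f+g)(x)(y-x)$ via superadditivity, giving constant $L_f+L_g$. Item 4 is the ``above the tangent'' property of concave functions: for finite concave $g$ the quotient $t\mapsto(g(x+tw)-g(x))/t$ is nonincreasing, so $g(y)-g(x)\le g'(x;y-x)=\d g(x)(y-x)$, which is \eqref{descet_pro} with $L=0$.

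For item 5 I would apply the chain rule of Theorem \ref{fcalc} (equivalently Proposition \ref{semidiff-chain}, as a finite concave $g$ and a smooth $F$ are both semi-differentiable) to get $\d(g\circ F)(x)(w)=g'(F(x);\nabla F(x)w)$, then combine the concave above-tangent bound $g(F(y))\le g(F(x))+g'(F(x);F(y)-F(x))$ with the vector descent lemma $\|F(y)-F(x)-\nabla F(x)(y-x)\|\le\tfrac L2\|y-x\|^2$; writing $F(y)-F(x)=\nabla F(x)(y-x)+\delta$ and using that $g'(F(x);\cdot)$ is Lipschitz with modulus equal to the (finite, by compactness of $F(\O)$) supergradient bound of $g$ over $F(\O)$, the remainder is a multiple of $L\|y-x\|^2$. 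The structural fact that unlocks items 6--8 is the elementary identity
\begin{equation*}
e_r f(x)=\tfrac{1}{2r}\|x\|^2+\inf_{y}\Big\{-\tfrac1r\la x,y\ra+\tfrac{1}{2r}\|y\|^2+f(y)\Big\},
\end{equation*}
whose second summand is an infimum of affine functions of $x$, hence concave; thus $e_r f=\tfrac{1}{2r}\|\cdot\|^2+\psi$ with $\psi$ concave, and finite when $r<c$ by prox-boundedness. Item 6 is then immediate: the quadratic is $\tfrac1r$-smooth (item 1), $\psi$ is concave (item 4, constant $0$), and item 3 adds them to $\tfrac1r$.

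Items 7 and 8 compose this decomposition with $F$. For item 7 I would write $[e_r g]\circ F=\tfrac{1}{2r}\|F(\cdot)\|^2+\psi\circ F$ and handle each summand on the convex-compact $\O$: the concave composite $\psi\circ F$ by item 5, and $\tfrac{1}{2r}\|F(\cdot)\|^2$ by noting it is $C^{1,1}$ on $\O$ so that the descent lemma (item 1, localized to the segment) applies; item 3 then adds them and the bounded factors from compactness collect into $\tfrac{L}{r}$. For item 8, where $g_1$ convex forces $e_r g_1$ to be finite and $\tfrac1r$-smooth, I would bound $f(y)-f(x)$ directly: apply the descent lemma to the $C^{1,1}$ map $e_r g_1$ for the first composite, and the subgradient inequality to the convex $g_2$ for the second, choosing the subgradient realizing $g_2'(F_2(x);\nabla F_2(x)(y-x))$. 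Because both composites are semi-differentiable, the difference rule holds with \emph{equality}, so $\d f(x)(y-x)$ splits exactly into $\la\nabla e_r g_1(F_1(x)),\nabla F_1(x)(y-x)\ra-g_2'(F_2(x);\nabla F_2(x)(y-x))$; the first-order terms cancel against the linearizations and the remaining second order is governed by $\tfrac1r$ (envelope) and $L$ (from $F_i$), collecting to $\tfrac{L(1+r)}{r}$.

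The main obstacle is the uniform control of the second-order remainders of the compositions, namely the cross terms $\la\nabla\phi,\delta\ra$ and $\|F(y)-F(x)\|^2$ generated when the first-order linearization is pushed through $F$. This is exactly where the hypotheses are used: compactness of $\O$ in items 5 and 7 bounds the (super)gradients and the Jacobian on the relevant image, whereas in items 6 and 8 the smoothing effect of the Moreau envelope (its $\tfrac1r$-Lipschitz gradient) together with the difference-of-structured-functions form supplies these bounds. The delicate point in item 8 is that a difference requires the subderivative sum rule to be an \emph{exact} equality rather than the one-sided superadditive inequality of item 3; this is available only because each composite piece is semi-differentiable (Proposition \ref{semidiff-chain}), and it is this equality that makes the two descent estimates additive.
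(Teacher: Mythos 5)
Your proposal is correct and follows essentially the same route as the paper: items 1--4 as building blocks (descent lemma, superadditivity of the subderivative under sums, the concave above-tangent inequality), item 5 via the chain rule plus the second-order remainder of $F$, and items 6--8 via the decomposition of $e_r f$ into $\tfrac{1}{2r}\|\cdot\|^2$ plus an infimum of affine (hence concave) functions, assembled with items 1--5. The only stylistic difference is in item 8, where you insist on an exact subderivative difference rule via semi-differentiability; the paper simply combines (7), (5) and (3), and indeed the one-sided superadditivity $\d f+\d g\le\d(f+g)$ already suffices there, so that extra care, while harmless, is not needed.
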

\begin{proof}
(1) is a classical result known as the descent lemma \cite[Lemma~4.22]{b} . (2) is obvious. To prove (3), observe  from  descent property that $  - \infty < \d f(x)(w)$ for all $x , w \in \R^n$. Hence, by defnition of ``$\liminf$",  we  get $\d f(x)(w) + \d g(x)(w) \leq \d (f+g)(x)(w)$ for all $x , w \in \R^n $, without imposing any qualification condition for sum. Now the descent property for $f+g$ follows from adding the corresponging inequalities in \eqref{descet_pro} for  both $f$ and $g$. Turning to (4), note that finite-valued concave functions are locally Lipshitz continuous and directionally differentiable, thus they are semi-differentiable with $g' (x; w) = \d g(x)(w) $ for all $x ,w \in \R^n$.  Therfore, (4) immedietly follows from the inequality for concave functions 
$$ g(y) - g(x)  \leq g' (x  ; y -x) .$$
To prove  (5),  first we recall some properties of $\d g$. Ineed, since $g$ is concave,  $\d g(z)(.)$ is concave and positively homeogeneous for all $z \in R^m$. Hence, for all $a , b \in \R^m$ one has 
$\d g(z)(a) \leq \d g(z)(a+b) - \d g(z)(b)$. Moreover, $| \d g(z)(w)  |\leq \ell \|w\|$, where $\ell > 0$ is any Lipschitz constant of $g$ on any neighborhood of $z$. We use the latter facts shortly.  Now set $\eta (x , y) := F(y) - F(x) - \nabla F(x) (y -x)  $. Since $F$ is $L$-smooth on $\O$, we have $\|\eta  (x ,y)\|  \leq  \frac{ L}{2}  \; \|y -x\|^2 $ for all $x ,y \in \O$. Now  fix $x ,y \in  \O$, by concavity of $g$ we have 
\begin{eqnarray*}
	\disp
	g(F(y))& \leq &  g(F(x)) + \d g(F(x))(F(y)-F(x))\\\nonumber
	&=&g(F(x)) + \d g(F(x))\big(\nabla F(x) (y -x) + \eta (x , y)\big)\\\nonumber
	&\leq&g(F(x)) + \d g(F(x))\big(\nabla F(x) (y -x)\big) -  \d g(F(x)) \big(-\eta (x , y) \big)\\\nonumber
	&\leq& g(F(x)) +  \d g(F(x))\big(\nabla F(x) (y -x)\big)  + \ell \|  - \eta (x,y)  \|  \\\nonumber
	&\leq& g(F(x)) + \d (g  \circ F) (x)(y -x) + \frac{\ell L}{2} \|  y - x \|^2  \\\nonumber
	\disp
\end{eqnarray*}
where $\ell > 0$ is any Lipschitz constant of $g$ over the set $\O + \B$.  Note that in the third and forth inequalities we used  the some properties of $\d g$ we discussed earlier. In the last inequality, we used the chain rule \eqref{fcalc1} .  Part (6), immediately follows from part (3) and (4) and the fact that the Moreau envelope of any function is a sum of a $L$-smooth function and a concave function. Indeed, $e_r f (x)  > - \infty$  for all $r \in (0 , c)$, and we have 
$$    e_r f (x) = \frac{1}{2r}    \|   x \|^2   + \inf_{y}   \{ - \frac{1}{r}   \la x , y \ra + \frac{1}{2r}   \|y\|^2  + f(y)  \} $$ 
noting that the infimum over an arbitrary number affine functions produces a concave function. To prove (7), we write $ e_r g (z) = \varphi (z) + h(z)$, where $\varphi$ is a $\frac{1}{r}$-smooth function and $h$ is a concave function. 
Hence, we have $([e_r g] \circ F) (x) = (\varphi \circ F)(x)  +  (h \circ F)(x) $. Observe that $\varphi \circ F$ is $\frac{L}{r}$-smooth function, and $h \circ F$ satisfies the descent property by (5), thus (7) follows from (2). Assertion (8) follows from $(7)$, $(5)$, and $(3).$  
\end{proof}
Observe that in (5), the compactness of  $\O$  is redundant if $g$ is Lipschitz continuous on $F(\O)$. It is well-known that the Moreau envelope of convex functions is $L$-smooth. Thus (6) can be read as a far generalization of the smoothening results to any prox-bounded functions, in particular, functions that are bounded from below. Assertion (8) pertains to the difference of amenable functions, in particular, the difference of convex functions when $F_i$ are taken identity maps; see Example \ref{dame}. The difference of amenable functions contains a broad class of non-amenable functions. In \cite{m22}, we show that the sub-problems of the subderivative method can be accurately solved for some interesting sub-class of difference of amenable functions, in particular, \eqref{dmax}. As we will see later, the convergence of the subderivative method solely depends on the descent property, thus applicable to the Moreau envelope of any bounded-below function, no matter if a function is highly discontinuous or irregular. For instance, the Moreau envelope of the norm-zero function, which has a closed-form; see Example \ref{spars}. The following result shows a rudimentary convergence of the subderivative method with a pre-defined line search. 
\begin{Lemma}[\bf convergence of subderivative method with a diminishing line search]\label{Th_easyconve}
Suppose $f: \R^n \to \oR$ satisfy the descent property \eqref{descet_pro} on $\R^n$. Then, the subderavative method with the positive tolerance $\varepsilon > 0$ and the step-size $\alpha_k > 0$ defined by any sequence $\{\alpha_k\}_{k=0}^{\infty}$ satisfying $\sum_{k} \alpha_k = \infty ,$ and $\sum_{k} \alpha_k^2 < \infty $, either stops after finite iterations, or it generates a sequence $\{x_k\}_{k=0}^{\infty}$ for which $f(x_k) \to -\infty$.
\end{Lemma}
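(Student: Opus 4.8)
The plan is to show that whenever the subderivative method fails to terminate, the descent property forces a decrease of the objective whose cumulative size diverges, driving $f(x_k)$ to $-\infty$. The whole argument is a telescoping estimate built on one per-iteration inequality.

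First I would record the consequence of non-termination. If the method does not stop at iteration $k$, then the termination test in step 1 fails, i.e.\ $\min_{\|w\|\le 1}\d f(x_k)(w) < -\varepsilon$; hence the direction $w_k$ selected in step 2 satisfies $\d f(x_k)(w_k) < -\varepsilon$ with $\|w_k\|\le 1$. Next I would exploit that the subderivative $\d f(x_k)(\cdot)$ is positively homogeneous of degree one — its epigraph equals the tangent cone $T_{\epi f}(x_k,f(x_k))$, which is a cone — to compute
$$
\d f(x_k)(\alpha_k w_k)=\alpha_k\,\d f(x_k)(w_k)<-\alpha_k\varepsilon .
$$
Applying the descent property \eqref{descet_pro} with $x=x_k$ and $y=x_{k+1}=x_k+\alpha_k w_k$, and using $\|x_{k+1}-x_k\|^2=\alpha_k^2\|w_k\|^2\le\alpha_k^2$, produces the key per-iteration estimate
$$
f(x_{k+1})\le f(x_k)-\varepsilon\,\alpha_k+\frac{L}{2}\,\alpha_k^2 .
$$

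Summing this telescoping inequality over $k=0,\dots,N-1$ gives
$$
f(x_N)\le f(x_0)-\varepsilon\sum_{k=0}^{N-1}\alpha_k+\frac{L}{2}\sum_{k=0}^{N-1}\alpha_k^2 .
$$
Letting $N\to\infty$ and invoking the step-size hypotheses $\sum_k\alpha_k=\infty$ and $\sum_k\alpha_k^2<\infty$, the first sum diverges to $+\infty$ while the second remains bounded, so the right-hand side tends to $-\infty$; therefore $f(x_N)\to-\infty$, which is the claimed alternative in the case the method never halts.

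The main subtlety (rather than a genuine obstacle) is the bookkeeping of finiteness along the iterates, which legitimizes both the subderivative at each step and the telescoping sum. Because $f$ is calm from below, $\d f(x_k)(\cdot)$ is proper and its minimum over the closed unit ball is finite, so each $\d f(x_k)(w_k)$ is a genuine finite negative number; combined with $f(x_0)$ finite, the per-iteration estimate inductively keeps every $f(x_k)$ finite (it is never $+\infty$ by the upper bound and never $-\infty$ since $f$ maps into $\oR$). I would state this induction explicitly before taking the limit so that every quantity appearing in the summation is well defined.
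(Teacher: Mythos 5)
Your proposal is correct and follows essentially the same route as the paper: non-termination gives $\d f(x_k)(w_k)<-\varepsilon$, positive homogeneity of the subderivative plus the descent property yields the per-iteration bound $f(x_{k+1})-f(x_k)\le-\varepsilon\alpha_k+\frac{L}{2}\alpha_k^2$, and telescoping with the step-size conditions drives $f(x_k)\to-\infty$. The extra bookkeeping on finiteness of the iterates is a reasonable addition but not a different argument.
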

\begin{proof}
We suppose the subderivative method does not stop at any iteration, meaning that for the generated vector $x_k$, the corresponding descent direction satisfies $\d f(x_k) (w_k) < - \varepsilon .$ Plugging $x_{k+1} = x_k + \alpha_k w_k$ into \eqref{descet_pro} we get
\begin{eqnarray*}
\disp
f(x_{k+1}) - f(x_k)  &\leq &  \d f(x_k) (x_{k+1} - x_k) + \frac{L}{2}  \| x_{k+1}   -  x_k \|^2 \\\nonumber
&=& \alpha_k \d f(x_k)(w_k)  + \frac{ L}{2} \alpha_k^2\\\nonumber
&\leq & - \varepsilon  \alpha_k  + \frac{L}{2} \alpha_k^2 \\\nonumber
\disp
\end{eqnarray*}
By summing up both sides of the above inequality over $k=0,1, ...$ we get
$$ \lim_{k \to \infty }f(x_{k+1}) - f(x_0) \leq - \varepsilon  \sum_{k=0}^{\infty}\alpha_k  + \frac{L}{2}  \sum_{k=0}^{\infty} \alpha_k^2  = - \infty $$
This finishes the proof. 
\end{proof}
\begin{Remark}[\bf Armijo backtracking line search] {\rm
As the proof of Lemma \ref{Th_easyconve} confirms, in the subderivative method, one does not need to solve the minimization sub-problems accurately. Any direction $w_k$  with $\d f(x_k)(w_k) < -\varepsilon$ will do the job. Although the step-size chosen in Lemma \eqref{Th_easyconve} is simple and can be determined independently from $x_k$ and $w_k$, it may cause slow convergence. One of the most efficient and practically used line searches is \textit{Armijo backtracking} method which can be adapted to the subderivative version; indeed, fix the parameter $\mu \in (0 , 1)$, called a reduction multiple, and assume that we are in the $k^{th} -$iteration. The Armijo backtracking line search determines the step-size $\alpha_k >0$ in the following way:
If the following inequality holds for $\alpha_k = 1$, then the step-size is chosen $\alpha_k =1$.  
\begin{equation}\label{backtrine}
f(x_k + \alpha_k w_k) - f(x_k) <\frac{\alpha_k}{2}  \d f(x_k) (w_k).
\end{equation}
Otherwise, keep updating $\alpha_k$ by multiplying it by $\mu$ until the above inequality holds. In the next lemma, we show that backtracking method terminates after finite numbers of updates onder some mild assumptions.}
\end{Remark}
\begin{Lemma}[\bf finitness of Armijo backtracking process]\label{backt}
In the each of the following situations,  the Armijo backtracking process to find the step-size $\alpha_k$, within the subderivative algorithm, terminates after finite number of iterations.
\end{Lemma}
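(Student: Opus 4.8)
The plan is to divide the Armijo test \eqref{backtrine} by the current trial step $\alpha_k > 0$ and to exploit the positive homogeneity of the subderivative, $\d f(x_k)(\alpha_k w_k) = \alpha_k\,\d f(x_k)(w_k)$, so that the question reduces to whether the one-sided difference quotient along the \emph{fixed} direction $w_k$ eventually drops below $\tfrac12\,\d f(x_k)(w_k)$. Since the algorithm only reaches the line search when the stopping test fails, we always have $\d f(x_k)(w_k) < -\varepsilon < 0$; hence the target value $\tfrac12\,\d f(x_k)(w_k)$ is strictly negative and, crucially, strictly larger than $\d f(x_k)(w_k)$ itself. Because backtracking replaces $\alpha_k$ by $\mu\alpha_k$ with $\mu \in (0,1)$, it suffices to exhibit a threshold $\bar\alpha > 0$ below which \eqref{backtrine} holds; the trial steps $1,\mu,\mu^2,\dots$ then fall below $\bar\alpha$ after finitely many reductions, at which point the line search stops.

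First I would treat the situation in which $f$ enjoys the descent property \eqref{descet_pro}. Inserting $y = x_k + \alpha_k w_k$ and $x = x_k$ into \eqref{descet_pro}, then using homogeneity and $\|w_k\| \leq 1$, gives
\begin{equation*}
f(x_k + \alpha_k w_k) - f(x_k) \leq \alpha_k\,\d f(x_k)(w_k) + \tfrac{L}{2}\alpha_k^2 .
\end{equation*}
Comparing the right-hand side with $\tfrac{\alpha_k}{2}\,\d f(x_k)(w_k)$ and cancelling $\alpha_k > 0$ reduces \eqref{backtrine} to the elementary inequality $\tfrac{L}{2}\alpha_k < -\tfrac12\,\d f(x_k)(w_k)$, that is $\alpha_k < -\d f(x_k)(w_k)/L$. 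Since $-\d f(x_k)(w_k) > \varepsilon$, the explicit threshold $\bar\alpha := \varepsilon/L$ works (and for $L = 0$ every $\alpha_k$ is admissible), so backtracking terminates; note that this bound is independent of $k$, which will be useful later for the global convergence analysis.

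Next I would handle the situation in which $f$ is semi-differentiable at $x_k$ for the direction $w_k$. Here the full limit
\begin{equation*}
\lim_{\alpha \dn 0}\frac{f(x_k + \alpha w_k) - f(x_k)}{\alpha} = \d f(x_k)(w_k)
\end{equation*}
exists and coincides with the subderivative, so the left-hand side of \eqref{backtrine} divided by $\alpha_k$ converges to $\d f(x_k)(w_k)$, which lies strictly below $\tfrac12\,\d f(x_k)(w_k)$ precisely because $\d f(x_k)(w_k) < 0$. Consequently there is $\bar\alpha > 0$ with \eqref{backtrine} valid throughout $(0,\bar\alpha)$, and finite termination follows as above.

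The main obstacle is exactly the gap between these favorable situations and the general case: the subderivative in \eqref{subderivatives} is a $\liminf$ over both $t \dn 0$ and $w' \to w_k$, whereas the Armijo quotient freezes the direction at $w_k$. For a merely calm-from-below $f$ one only obtains $\liminf_{\alpha\dn 0}\bigl(f(x_k+\alpha w_k)-f(x_k)\bigr)/\alpha \geq \d f(x_k)(w_k)$, an inequality pointing the wrong way for an \emph{upper} estimate, so the frozen-direction quotient may fail to approach $\d f(x_k)(w_k)$ and the line search need not terminate. The two hypotheses repair this in complementary ways---the descent property supplies a quantitative quadratic upper bound on the decrease, while semi-differentiability upgrades the $\liminf$ to a genuine limit---and these are precisely the regularity properties that the listed situations are designed to guarantee.
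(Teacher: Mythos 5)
Your argument is correct and is essentially the paper's own proof: in the descent-property case you plug $y=x_k+\alpha_k w_k$ into \eqref{descet_pro}, use $\|w_k\|\le 1$ and positive homogeneity to reduce \eqref{backtrine} to $\alpha_k < -\d f(x_k)(w_k)/L$, and in the semi-differentiable (directionally lower regular) case you use that the frozen-direction quotient converges to $\d f(x_k)(w_k)$, which is strictly below $\tfrac12\,\d f(x_k)(w_k)$ since that value is negative. Your explicit uniform threshold $\bar\alpha=\varepsilon/L$ and the closing discussion of why calmness alone fails (the $\liminf$ over $w'\to w_k$ versus the frozen direction) are slight refinements of, but fully consistent with, the paper's reasoning.
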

\begin{itemize}
\item[(i)] $f$ is  directionally lower regular (in particular, semi-differentiable) at $x_k$ for each $k=1,2...$
\item[(ii)] $f$ satisfies the descent property \eqref{descet_pro} on $\R^n .$
\end{itemize}
\begin{proof}
Let $x_k , w_k \in \R^n$ with $\d f(x_k) (w_k) < 0$. Hence, $\d f(x_k) (w_k) < \frac{1}{2} \d f(x_k) (w_k).$ In situation (i), we have 
$$  \d f(x_k) (w_k) = \lim_{ t \dn 0} \frac{f(x_k + t w_k) - f(x_k)}{t} < \frac{1}{2} \d f(x_k) (w_k) $$
hence, there exists $m \in \N$ sufficiently large such that
$$  f(x_k + \mu^{m} w_k) - f(x_k) <\frac{\mu^{m}}{2}  \d f(x_k) (w_k) .$$
In situation (ii), for all $t >0 $, the descent property \eqref{descet_pro} gives us 
\begin{eqnarray*}
\disp
\frac{f(x_k + t w_k) - f(x_k)}{t} \leq  \d f(x_k)(w_k)  + \frac{ L}{2} t .\\\nonumber
\disp
\end{eqnarray*}
Define $m$ by the smallest non-negative integer number such that $$   \d f(x_k)(w_k)  + \frac{ L}{2} \mu^m < \frac{1}{2} \d f(x_k)(w_k) $$  
where $\mu \in (0 ,1)$ is the reduction multiple in the Armijo backtracking method. Thus, it can be observed that \eqref{backt} holds for $\alpha_k = \mu^{m}$.
\end{proof}
In the following theorem, we prove that the subderivative method with Armijo backtracking line search,  intialized on an aribitrary vector in $\R^n$, finds a $\varepsilon$-$d$-stationary point after atmost $O(\varepsilon^{-2})$ iterations.  
\begin{Theorem}[\bf convergence rate of the subderivative method]\label{rate_TH}
Let $\{x_k\}_{k =0}^{\infty}$ be a sequence generated by the subderivative method with the Armijo backtracking line search. Assume that $f$ is bounded below and has the descent property \eqref{descet_pro} on $\R^n$. Then, for sufficiently large number $N \in \N$, the following inequality holds
	\begin{equation}\label{rate}
		\min_{ 0 \leq k \leq N}  |\d f(x_k)(w_k) |   \leq \sqrt{\frac{f(x_0) - f^{*}}{M (N+1)}}
	\end{equation}
	where $f^*$ is a lower bound of $f$ and $M:=\min\{\frac{1}{2} , \frac{\mu}{2L}\} $.
\end{Theorem}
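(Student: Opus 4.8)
The plan is to turn the Armijo acceptance inequality into a uniform per-iteration decrease of the form $f(x_k)-f(x_{k+1})\ge M|d_k|^2$, where I abbreviate $d_k:=\d f(x_k)(w_k)=\min_{\|w\|\le 1}\d f(x_k)(w)$. Since the zero direction is feasible and $\d f(x_k)(0)\le 0$ (the tangent cone to $\epi f$ contains the origin), we have $d_k\le 0$ and $|d_k|=-d_k$. Once the per-step estimate is in hand, summing over $k=0,\dots,N$ telescopes the left-hand side to $f(x_0)-f(x_{N+1})\le f(x_0)-f^{*}$, and bounding the sum below by $(N+1)M\min_{0\le k\le N}|d_k|^2$ yields $\min_{0\le k\le N}|d_k|^2\le \frac{f(x_0)-f^{*}}{M(N+1)}$; taking square roots gives \eqref{rate}. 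The entire argument thus reduces to producing that single per-step inequality.

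First I would extract a quantitative lower bound on the step size returned by the backtracking routine, reusing the computation in the proof of Lemma \ref{backt}(ii). Evaluating the descent property \eqref{descet_pro} along $w_k$ gives $f(x_k+\alpha w_k)-f(x_k)\le \alpha d_k+\tfrac{L}{2}\alpha^2$, so the Armijo test \eqref{backtrine} holds for every $\alpha\in(0,|d_k|/L)$. Consequently, if the initial trial $\alpha=1$ is accepted then $\alpha_k=1$; otherwise the last rejected value $\alpha_k/\mu$ must satisfy $\alpha_k/\mu\ge |d_k|/L$, whence $\alpha_k\ge \mu|d_k|/L$. In every case $\alpha_k\ge \min\{1,\mu|d_k|/L\}$, and Lemma \ref{backt} guarantees the process terminates, so $\alpha_k$ is well defined.

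Feeding this into the accepted Armijo inequality $f(x_k)-f(x_{k+1})>-\tfrac{\alpha_k}{2}d_k=\tfrac{\alpha_k}{2}|d_k|$ produces
$$f(x_k)-f(x_{k+1}) > \tfrac{1}{2}|d_k|\min\{1,\tfrac{\mu|d_k|}{L}\} = \min\{\tfrac{|d_k|}{2},\tfrac{\mu|d_k|^2}{2L}\}.$$
When the second term is the minimum this already exceeds $\tfrac{\mu}{2L}|d_k|^2\ge M|d_k|^2$; when the first term is the minimum and $|d_k|\le 1$, it dominates $\tfrac{1}{2}|d_k|^2\ge M|d_k|^2$. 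In both of these regimes the required estimate $f(x_k)-f(x_{k+1})\ge M|d_k|^2$ holds, which is exactly where the definition $M=\min\{\tfrac12,\tfrac{\mu}{2L}\}$ comes from.

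The main obstacle, and the reason for the qualifier \emph{for sufficiently large $N$}, is the leftover regime in which $\alpha_k=1$ while $|d_k|>1$: there the crude bound $f(x_k)-f(x_{k+1})>|d_k|/2$ need not dominate $M|d_k|^2$. The hard part is to argue that such iterations are harmless. Each of them decreases $f$ by more than $\tfrac12\ge M$, and since $f$ is bounded below by $f^{*}$ while $\{f(x_k)\}$ is nonincreasing, at most $2\,(f(x_0)-f^{*})$ of them can ever occur; beyond some finite index all iterations obey the clean estimate. For $N$ past that point, telescoping over the remaining indices still leaves essentially $N+1$ terms each satisfying $f(x_k)-f(x_{k+1})\ge M|d_k|^2$, and because $\min_{0\le k\le N}|d_k|$ is no larger than the minimum over this full-measure tail, \eqref{rate} follows. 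Checking that the finite correction from the exceptional steps is absorbed without degrading the $1/(N+1)$ rate is the one piece of bookkeeping that needs care.
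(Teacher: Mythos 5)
Your core mechanism is exactly the paper's: you combine the descent property \eqref{descet_pro} along $w_k$ with the fact that the last rejected trial step $\alpha_k/\mu$ violates \eqref{backtrine} to extract $\alpha_k\ge\min\{1,\mu|d_k|/L\}$ (writing $d_k:=\d f(x_k)(w_k)$), and then feed this back into the accepted Armijo inequality to get the per-iteration decrease $f(x_k)-f(x_{k+1})>\min\bigl\{\tfrac12|d_k|,\tfrac{\mu}{2L}|d_k|^2\bigr\}$. Up to that point the argument is sound and coincides with the paper's proof of its sufficient-decrease claim.

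The one genuine gap is the piece you yourself flagged, and it is not absorbed by taking $N$ large: discarding the exceptional iterations where $|d_k|$ is large and summing only over the remaining indices yields $\min_{0\le k\le N}|d_k|^2\le\frac{f(x_0)-f^*}{M(N+1-C)}$ with $C$ the number of discarded steps, which is strictly weaker than \eqref{rate}; the $N+1$ in the denominator is lost, not merely delayed. The paper avoids the case split entirely by tracking $\delta_k:=\min\{|d_k|,d_k^2\}$: your per-step bound already gives $f(x_k)-f(x_{k+1})>\min\{\tfrac12|d_k|,\tfrac{\mu}{2L}d_k^2\}\ge\min\{M|d_k|,Md_k^2\}=M\delta_k$ for \emph{every} $k$, so summing all $N+1$ terms yields $\min_{0\le k\le N}\delta_k\le\frac{f(x_0)-f^*}{M(N+1)}$; once $N$ is large enough that this bound is below $1$, the minimizing index $k^*$ must satisfy $|d_{k^*}|<1$, whence $\delta_{k^*}=d_{k^*}^2\ge\bigl(\min_{0\le k\le N}|d_k|\bigr)^2$ and \eqref{rate} follows with the stated constant. (Equivalently, within your scheme: once your weaker bound forces $\min_{0\le j\le N}|d_j|\le 1$, even the exceptional iterations satisfy $f(x_k)-f(x_{k+1})>\tfrac12\ge M\ge M\bigl(\min_j|d_j|\bigr)^2$, so you may re-sum over all $N+1$ indices.) Either patch is short, but as written the proposal does not deliver \eqref{rate}.
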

\begin{proof} Before proving \eqref{rate}, let us first prove the following claim that guarantees the sufficient decrease of the subderivative method under the descent propety assuption. 
\textbf{Claim:} For each $k\in \N \cup \{0\}$,  the follwing inequality holds
\begin{equation}\label{suf-dec1}
f(x_{k+1}) - f(x_k)   \leq  -  M  \;  \min\{ |\d f(x_k)(w_k)| \; , \;  [\d f(x_k)(w_k)]^2 \}
\end{equation}
proof: set  $\delta_k := \min\{ |\d f(x_k)(w_k)| , [\d f(x_k)(w_k)]^2 \}$. We suppose we are in the $k^{th}-$ iteration of the subderivative method and moving toward to the next generated point. This ensures the existence of the direction $w_k \in \R^n$ with $\d f(x_k)(w_k) < 0$. By the descent property \eqref{descet_pro} for all $t >0$ we have
\begin{eqnarray}\label{sufeq1}
	\disp
	f(x_{k} + t w_k) - f(x_k)  &\leq &  \d  f(x_k) (tw_k) + \frac{Lt}{2}  \| w_k \|^2 \\\nonumber
	&=& t \d f(x_k)(w_k)  + \frac{ L t^{2}}{2} 
	\disp
\end{eqnarray} 
Let $\alpha_k$ be determined by the backtracking procedure, meaning that the stepsize $\frac{\alpha_k}{\mu}$ is not acceptable and does not satisfy \eqref{backtrine}:
\begin{equation}\label{sufdec3}
	f(x_k + \frac{\alpha_k}{\mu} w_k ) - f(x_k)  >  \frac{\alpha_k}{2\mu} \d f(x_k)(w_k).
\end{equation}
Combining the above inequality with \eqref{sufeq1} we get
\begin{eqnarray*}
	\disp
	& &  \frac{\alpha_k}{2\mu} \d f(x_k)(w_k) < \frac{\alpha_k}{ \mu} \d f(x_k)(w_k)  + \frac{ L \alpha_k^{2}}{ 2\mu^2}\\\nonumber
	&\rightarrow &   \d f(x_k)(w_k) < 2 \d f(x_k)(w_k)  + \frac{ L \alpha_k}{ \mu}\\\nonumber
	&\rightarrow &  - \frac{ \mu \d f(x_k)(w_k)}{L} < \alpha_k \\\nonumber 
	&\rightarrow &  -\frac{\mu}{2L} [\d f(x_k)(w_k)]^2 > \frac{\alpha_k}{2} \d f(x_k)(w_k). \\\nonumber
	\disp
\end{eqnarray*}       
Combining the above last inequality with \eqref{backtrine} we get
$$   f(x_k + {\alpha_k} w_k ) - f(x_k)  \leq  \frac{\alpha_k}{2} \d f(x_k)(w_k) < -\frac{\mu}{2L} [\d f(x_k)(w_k)]^2 \leq -M \delta_k .$$     
This completes the proof of the claim. Turning to the proof of \eqref{rate}, by summing up the both sides of \eqref{suf-dec1} we get
\begin{eqnarray}\label{conv1}
	\disp
	&& M \sum_{k=0}^{N} \delta_k  \leq  \sum_{k=0}^{N} f(x_k) - f(x_{k+1})\\\nonumber
	&\rightarrow&  (N+1)M \min_{ 0 \leq k \leq N} \{\delta_k \} \leq f(x_0) - f(x_{N+1})  \leq  f(x_0) - f^*\\\nonumber  
	& \rightarrow & \min_{ 0 \leq k \leq N} \{\delta_k\}  \leq \frac{f(x_0) - f^*  }{M (N+1)}
	\disp
\end{eqnarray}
Above inequality holds for all non-negative integer $N$. Additionally,  if $N$ is sufficiently large, we have $\min_{ 0 \leq k \leq N} \{\delta_k\} < 1$. Considering the definition of $\delta_k$, for such $N$ we have
$$    \min_{ 0 \leq k \leq N} \{\delta_k\} = \min_{ 0 \leq k \leq N} \{  [\d f(x_k)(w_k)]^2 \}    $$
this together with last inequality obtained in \eqref{conv1} proves \eqref{rate}.
\end{proof}
In Theorem \eqref{rate_TH}, we really do not need to impose the descent property on entire $\R^n$. For instance, if $f$ is semi-differentiable and coercive,  we can  assume the descent property locally on $\R^n$. In the following theorem, similar to the gradient descent method for smooth minimization,  we show that we only need to have the descent property on a sublevel set containing the iterations of the algorithm. 
\begin{Theorem}[\bf convergence of subderivative descent method]\label{rate_TH2}
Let $\{x_k\}_{k =0}^{\infty}$ be a sequence generated by the subderivative method with the Armijo backtracking line search. Set $\O := \{ x \big| \; f(x) \leq f(x_0)\} $. Assume that $f$ is bounded-below, semi-differentiable on $\O$, and has the descent property \eqref{descet_pro} on $\O + B$. Then, for sufficiently large number $N \in \N$, \eqref{rate} holds.  
\end{Theorem}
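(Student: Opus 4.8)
The plan is to show that the entire trajectory of the algorithm, together with the finitely many trial points examined by the backtracking routine, never leaves the enlarged sublevel set $\O+B$, where $B$ denotes the closed unit ball; once this localization is in place, the sufficient-decrease estimate \eqref{suf-dec1} and the ensuing summation from the proof of Theorem \ref{rate_TH} apply word for word and yield \eqref{rate}. Thus the task is not to redo the convergence analysis but merely to confine it to $\O+B$.

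First I would argue by induction that $x_k\in\O$ for every $k$. The base case is immediate, since $f(x_0)\le f(x_0)$. Assuming $x_k\in\O$, the semi-differentiability of $f$ on $\O$ makes $f$ directionally lower regular at $x_k$, so by Lemma \ref{backt}(i) the Armijo backtracking terminates after finitely many reductions and returns some $\alpha_k\in(0,1]$. Because the search direction satisfies $\d f(x_k)(w_k)<-\varepsilon<0$ while $\alpha_k>0$, the accepted step obeys the Armijo inequality \eqref{backtrine}, whose right-hand side $\tfrac{\alpha_k}{2}\d f(x_k)(w_k)$ is strictly negative; hence $f(x_{k+1})<f(x_k)\le f(x_0)$, so $x_{k+1}\in\O$ and the induction closes.

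Next I would record the crucial containment. Every step size produced by the backtracking routine has the form $\mu^{j}$ with $j\ge 0$, so each trial point is $x_k+\mu^{j}w_k$ with $x_k\in\O$, $\|w_k\|\le 1$, and $\mu^{j}\le 1$; consequently $\|(x_k+\mu^{j}w_k)-x_k\|\le 1$ and $x_k+\mu^{j}w_k\in\O+B$. In particular, the two displacements at which the descent property is invoked in the proof of the Claim in Theorem \ref{rate_TH} — namely $t=\alpha_k$ and, when $\alpha_k<1$, the rejected $t=\alpha_k/\mu=\mu^{\,j-1}\le 1$ — both land in $\O+B$, exactly where \eqref{descet_pro} is assumed to hold. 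Therefore \eqref{sufeq1} remains valid for these values of $t$, the derivation of \eqref{suf-dec1} goes through unchanged (the case $\alpha_k=1$ being absorbed, as before, by the $\tfrac12$-branch of $M=\min\{\tfrac12,\tfrac{\mu}{2L}\}$), and summing \eqref{suf-dec1} over $k=0,\dots,N$ reproduces \eqref{conv1} and hence \eqref{rate}.

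The hard part is precisely this containment bookkeeping: one must observe that the algorithm only ever queries $f$ at displacements of norm at most one from the current iterate, which is what forces the enlargement by $B$ (and no more) in the hypothesis and permits the global estimate of Theorem \ref{rate_TH} to be replaced by its restriction to $\O+B$. No analytic ingredient beyond the uniform bounds $\|w_k\|\le 1$ and $\alpha_k\le 1$ is required.
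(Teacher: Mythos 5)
Your proposal is correct and follows essentially the same route as the paper: show by induction that the iterates remain in $\O$ (via semi-differentiability and Lemma \ref{backt}(i), which make the method a descent method), observe that every trial point of the backtracking routine lies within unit distance of the current iterate and hence in $\O + \B$, and then reuse the sufficient-decrease argument of Theorem \ref{rate_TH} verbatim. Your localization of the trial points via $\|\mu^{j}w_k\|\le 1$ is a slightly more direct bookkeeping than the paper's detour through the interval $[0,1+\varepsilon_k]$, but it is not a genuinely different argument.
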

\begin{proof}
First, we prove inequality \eqref{suf-dec1} when the descent property \eqref{descet_pro} is only assumed on $\O + B.$ To prove the latter, by Lemma \eqref{backt} (i), the subderivative method is a descent method, because $f$ is semi-differentiable on $\O$. Hence, for each $k \in \N$, we have  $x_k \in \O$.  Indeed,  since $f$ is semi-differentiable, for each $ k \in \N$, we have $\d f(x_k)(w_k) = f' (x_k ; w_k) <0 $. Therefore, for each $k \in \N$, there exists a $\varepsilon_k >0 $ such that $x_k + t w_k \in \O$ for all $t \in [0, \varepsilon_k]$, hence, $x_k + t w_k \in \O + \B$ for each $t \in [0, 1 + \varepsilon_k]$. Now proceeding with the proof of inequality \eqref{suf-dec1}, inequality \eqref{sufeq1} holds for all $t \in [0, 1 + \varepsilon_k]$. Now if $\alpha_k \neq 1$ is determined by the Armijo backtracking procedure, inequality \eqref{sufdec3} holds with $\alpha_k = \mu^m$ for some positive integer $m$. Since $ \frac{\alpha_k}{\mu} \in (0 ,1 + \varepsilon_k]$, we get $x_k + \frac{\alpha_k}{\mu} w_k $. Form here, the rest of the proof, including the proof of \eqref{rate},  agrees line by line of the proof of Theorem \ref{rate_TH} after inequality \eqref{sufdec3}. 
\end{proof}
\begin{Remark}[\bf comments on convergence results]{\rm
Since norms are equivalent in finite dimensions, the descent property \eqref{descet_pro} is invariant under the choice of the norm, i.e.,  \eqref{descet_pro}  holds if and only it holds with a quadratic term $\|y-x\|_{*}^2$ where $\|  . \|_{*}$ is any norm on $\R^n$. Therefore, if in the subderivative method, one solves the sub-problems with respect to any norm $\|.\|_{*}$, i.e., $\| w\|_{*} \leq 1$, the convergence results, Theorems \ref{rate_TH} and \ref{rate_TH2},  remain true. The only thing that will change is the constant $M$ in \eqref{rate}. As we will see shortly, changing the norm sometimes leads to solving the sub-problems of the subderivative method accurately. A close look at the proof of Theorem \ref{rate_TH2}, reveals that the assumption of semi-differentiability can be replaced by directional lower regularity, as we only used the $\d f(x)(w) = f' (x;w)$ as result of semi-differentiability. The latter improvement might seem minor, yet there are functions which are directionally lower regular but not semi-differentiable; e.g. $f(x) = \| x \|_{0}$, Example \ref{subzero}.}
\end{Remark}

\begin{Example}[\bf problems with separable-variable subderivative]\label{sep} {\rm Let $\varphi : \R^n \to \R$ be a differentiable function, and $ g : \R^n \to \R$ be a function with separable-variable subderivative, i.e, there exist one-variable functions $g_i : \R \to \R$ , $i =1,...,n$ such that for each $x \in \R^n$ and each $w \in \R^n$ one has $ \d g(x)(w) = \sum^{n}_{i=1} g_i (w_i ) $. In this example, we consider the following minimization problem:
		\begin{equation}\label{sep1}
			\mbox{minimize} \;\;  f(x):= \varphi (x) + g(x)  \quad \mbox{subject to}\;\; x \in \R^n.
		\end{equation}
		For each $x \in \R^n$ and each $i \in \{ 1,...,n \}$, let $s_i$ and $w_i$ be the optimal value and an optimal solution of the following one-variable minimization problem respectively
		\begin{equation}\label{sep2}
			P(x , i): \quad \quad \mbox{minimize} \;\;  \frac{\partial \varphi (x)}{ \partial x_i}  t + h_i(t)  \quad \mbox{subject to}\;\; t \in [-1 , +1].
		\end{equation}
		Then, the subderivative method for problem \eqref{sep1}, with the norm-choice $\| . \|_{\infty}$ in sub-problems, reduces to the following process
		\begin{table}[H]
			\renewcommand{\arraystretch}{0}
			\centering
			\label{alg1}
			\begin{tabular}{|
					p{0.9\textwidth} |}
				\hline
				\textbf{}\\
				
				\begin{enumerate}[label=(\alph*)]
					
					\item[0.] \textbf{(Initialization)} Pick the tolerance $\varepsilon > 0$, starting point $x_0 \in \R^n$, and set $k=0 .$
					
					\item[1.] \textbf{(Termination)} $\sum^{n}_{i=1} s^k_i \geq - \varepsilon $ where each $s^k_i$ is optimal value of the problem $P(x_k , i)$ in \eqref{sep2}.
					\item[2.] \textbf{(Direction Search)} The components of the descent direction $w_k$ are $w_k^i$, an optimal solution of problem $P(x_k , i)$. 
					\item[3.] \textbf{(Line Search)} Choose the step size $\alpha_{k}  > 0$ through a line search method.
					\item[4.] \textbf{(Update)} Set $x_{k+1} : = x_k + \alpha_k  w_k$ and $k+1  \leftarrow k$ then go to step 1. 
				\end{enumerate}
				\\ \hline
			\end{tabular}
		\end{table}  
	} 
\end{Example}
Detail for the direction search: given the iteration $x_k$, the subderivative of $f$ at $x_k$ in direction $w$ is $$ \d f(x_k)(w) = \la \nabla \varphi (x) , w  \ra + \d g(x_k) (w) = \sum_{i = 1}^{n} \big( \frac{\partial \varphi (x_k)}{ \partial x_i}  w_i + g_i(w_i) \big)  .$$ 
On the other hand, the constraint $\| w \|_{\infty} \leq 1$ is fully separable, and it is equivalent to $w_i \in [-1  , +1]$ for all $i =1,...,n$. The latter makes the sub-problem of direction search fully separable in $w$. Indeed,
$$  \min_{\| w \|_{\infty} \leq 1} \d f(x_k)(w) =\sum_{i = 1}^{n} \big( \min_{| w_i |\leq 1} \frac{\partial \varphi (x_k)}{ \partial x_i}  w_i + g_i(w_i) \big) = \sum_{i = 1}^{n} s_i^k .$$
Examples of non-differentiable functions with separable-variable subderivative are $g(x) = \| x\|_{1}$ and $g(x) = - \|x\|_{1}$. Indeed, in the case $g(x) =  \lambda \|  x\|_1$ where $\lambda >0$,  we have 
$$ \d g(x)(w) = \sum_{i \in I_1 \cup I_2} \lambda w_i + \sum_{i \in I_3 \cup I_4} (-\lambda w_i)  +  \sum_{i \in I_5 \cup I_6 \cup I_7} \lambda |w_i| $$
where
$$I_1 (x) = \{ i| \; x_i >0 \; ,\; \frac{\sub \varphi}{\sub x_i} (x) + \lambda \geq 0  \}, \quad I_2 (x) =\{ i| \; x_i >0 \; ,\; \frac{\sub \varphi}{\sub x_i} (x) + \lambda < 0  \}, $$

$$ I_3 (x) = \{ i| \; x_i < 0 \; ,\; \frac{\sub \varphi}{\sub x_i} (x) - \lambda \geq 0  \}, \quad I_4 (x) = \{ i| \; x_i < 0 \; ,\; \frac{\sub \varphi}{\sub x_i} (x) - \lambda < 0  \}$$

$$ I_5 (x) =\{ i| \; x_i = 0 \; ,\; \frac{\sub \varphi}{\sub x_i} (x) - \lambda \geq 0  \}, \quad I_6 (x) = \{ i| \; x_i = 0 \; ,\; \frac{\sub \varphi}{\sub x_i} (x) + \lambda \leq 0  \}$$

$$  I_7 (x) = \{ i| \; x_i = 0 \; ,\; \frac{\sub \varphi}{\sub x_i} (x) + \lambda > 0, \quad  \frac{\sub \varphi}{\sub x_i} (x) - \lambda < 0 \}  $$
Therefore, by Example \ref{sep}, the vector $w_k=(w_k^1,w_k^2,...,w_k^n)$ defined by
$$  w_{k}^i := \begin{cases}
	-1 &  i \in I_1 \cup I_3 \cup I_5 \\
	1&  i \in I_2 \cup I_4 \cup I_6 \\
	0&  i \in I_7
\end{cases} $$
solves the sub-problems of the subderivative method.  
\begin{Example}[\bf a sub-class of difference of amenable functions]\label{cc} {\rm Let $\varphi : \R^n \to R$ and $F:\R^n \to \R^m$ be $L$-smooth functions. Let $g: \R^m \to \R$ be a convex function. In this example, we consider a non-regular composite minimization problem in the following framework 
		\begin{equation}\label{cc1}
			\mbox{minimize} \;\;  f(x):= \varphi (x) - (g \circ F)(x)  \quad \mbox{subject to}\;\; x \in \R^n.
		\end{equation} 
For each $i \in \{1,2,...,n\}$, let $e_i \in  \R^n$ be the vector whose $i$-th component is $1$ and zero for other components. Define the finite set $E \subset \R^n$ with $2n$ elements by $E:=\{\pm e_i |\; i=1,2,...,n\}.$ Then, the subderivative method, with the norm-choice $\|.\|_{1}$, for problem \eqref{cc1} reduces to the following process
	\begin{table}[H]
	\renewcommand{\arraystretch}{0}
	\centering
	\label{alg1}
	\begin{tabular}{|
			p{0.9\textwidth} |}
		\hline
		\textbf{}\\
		
		\begin{enumerate}[label=(\alph*)]
			
			\item[0.] \textbf{(Initialization)} Pick the tolerance $\varepsilon > 0$, starting point $x_0 \in \R^n$, and set $k=0 .$
			
			\item[1.] \textbf{(Termination)}  $$s_k :=\min \{ \pm \frac{\sub \varphi (x_k)}{x_i} \mp  \d g(F(x_k))(\nabla F(x_k)e_i)  | \; i=1,2,...,n   \}  \geq - \varepsilon$$.
			\item[2.] \textbf{(Direction Search)} Pick $w_k \in E$ with the minimum value 
			$$  \la \nabla \varphi (x_k) , w_k \ra -\d g(F(x_k))(\nabla F(x_k)w_k)  $$
			\item[3.] \textbf{(Line Search)} Choose the step size $\alpha_{k}  > 0$ through a line search method.
			\item[4.] \textbf{(Update)} Set $x_{k+1} : = x_k + \alpha_k  w_k$ and $k+1  \leftarrow k$ then go to step 1. 
		\end{enumerate}
		\\ \hline
	\end{tabular}
\end{table}  
}
\end{Example}
Detail for the direction search: given the iteration $x_k$, the subderivative of $f$ at $x_k$ in direction $w$ is $$ \d f(x_k)(w) = \la \nabla \varphi (x_k) , w\ra -\d g(F(x_k))(\nabla F(x_k)w)  .$$
Observe that $\d f(x_k)(.)$ is a concave function, thus at least one of its global minimizers subject to the polytube $P:= \{w| \; \|w\|_{1} \leq 1 \}$ happens on an extreme point of $P$. The set of all extreme points of $P$ is $E$ defined above. Therefore, the sub-problems of the subderivative method in the direction search are accurately solved by the procedure in the above table. 
\begin{Remark}[\bf  difference of amenable functions]{\rm In Example \ref{cc}, the sub-problems of the subderivative method reduced to the evaluating of the subderivative function, $\d f(x_k)(.)$, at $2n$ elements of the set $E$, where $n$ is the number of variables. It is worth to mention that we could reduce these $2n$ evaluations to $n+1$ evaluations by evaluating $\d f(x_k)(.)$ at vectors $\{e_i |\; i=1,...,n\} \cup \{\ - e \} $ where $e \in \R^n$ is the vector with its components equals to $1$. Indeed,  $P:= \co \big(\{e_i |\; i=1,...,n\} \cup \{\ - e \} \big) $ is a convex polytube in $\R^n$ having origin in its interior. Therefore, there exists a norm $\|.\|_{*}$ such that $P=\{w| \; \| w \|_{*} \leq 1   \}$. Now it is enough to consider the subderivative method with the norm-choice $\|.\|_{*}.$ Although Example \ref{cc} deals with a sub-class of the difference of amenable functions, it can be used to solve all difference of amenable functions. Indeed, by applying a Moreau envelope on the convex part, the general case can be fitted into the framework \ref{cc1}. More precisely, in order to solve
   \begin{equation*}\label{cc2}
	\mbox{minimize} \;\;  (g_1 \circ F_1)(x) - (g_2 \circ F_2)(x)  \quad \mbox{subject to}\;\; x \in \R^n,
\end{equation*}  		   
one can apply a Moreau envelope only on the convex function $g_1$ to smoothening $g_1 \circ F_1$, i.e.,
  \begin{equation*}\label{cc3}
	\mbox{minimize} \;\;  \varphi(x) - (g_2 \circ F_2)(x)  \quad \mbox{subject to}\;\; x \in \R^n,
\end{equation*}  	
where $\varphi := [e_r g_1] \circ F_1$. The convergence analysis of the latter problem was discussed in Proposition \ref{sufdes} (8). In \cite{m22}, we investigate the difference of amenable functions in more detail and establish a relationship between the stationary points of the last two problems.  
}
\end{Remark}    

\textbf{Concluding remarks:} In this paper, we studied first-order variational analysis of non-regular functions, mainly the non-amenable composite functions. The approach was taken in this paper is purely primal. By establishing rich calculus rules for a suitable notion of directional derivatives, we could identify the stationary points in a large class of optimization problems, including the ones in the example section. We adopted a generalized gradient descent method to search for such stationary points. In particular, we showed that our algorithm applied on the Moreau envelope of any bounded-below function finds a $\varepsilon$-stationary point with the same rate that the smooth gradient descent does. Our future goal is to treat some examples in section \ref{sect01} numerically and solve the sub-problems of the subderivative method for some class of functions. In \cite{m22}, we give a full treatment of the numerical analysis of Example \ref{dmax}.    
\small


\begin{thebibliography}{10}
\bibitem{asp} { A. Alvarado, G. Scutari, and J. Pang}, {\em A new decomposition method for multiuser dc-programming and its applications}, IEEE Transaction on signal processing,  62(2014),  no. 11.

\bibitem{b} {A. Beck}, {\em Introduction to nonlinear optimization}, SIAM, Philadelphia, 2014.

\bibitem{by} { K. Bui and J. Ye}, {\em Directional necessary optimality conditions for bilevel programs}, Math. Ope. Res. (2021), published Electronically.

\bibitem{byz} { K. Bui, J. Ye, and J.Zhang}, {\em Directional quasi-/pseudo-normality as sufficient conditions for metric subregularity}, SIAM J. Optim., 29(2019), pp. 2625–2649.

\bibitem{bs} { J.F. Bonnans and A. Shapiro}, {\em Perturbation Analysis of Optimization Problems}, Springer, New York, 2000.

\bibitem{bl}{J. V. Burke and Q. Lin},{\em Convergence of the gradient sampling algorithm on directionally lipschitz functions}, 2021, arXiv 2107.04918.

\bibitem{bp}{ J. V. Burke and R. A. Poliquin }, {\em Optimality conditions for non-finite valued convex composite functions}, Math. Program, 57(1992), pp. 103-120.

\bibitem{bf} { J. V. Burke and M. C. Ferris}, {\em A Gauss-Newton method for convex composite optimization}, Math. Program., 71 (1995), pp. 179-194.

\bibitem{blo} { J. V. Burke, A. S. Lewis, and M. L. Overton}, {\em A robust gradient sampling algorithm for nonsmooth, nonconvex optimization}, SIAM J. Optim., 15(2005), 751–779. 

\bibitem{c} { F. Clarke F}. {\em Functional analysis, calculus of variations and optimal control}, Springer, London, 2013.

\bibitem{cp} { Y. Cui and J-S Pang}, {\em Modern nonconvex nondifferentiable optimization}, SIAM, Philadelphia, 2021.

\bibitem{cps} {Y. Cui, J-S Pang, and B. Sen}, {\em Composite difference-max programs for modern statistical estimation problems}, SIAM J. Optim., 28(2018), pp. 3344–3374.

\bibitem{dp} {D. Drusvyatskiy and C. Paquette}, {\em Efficiency of minimizing compositions of convex functions and smooth maps}, Math. Program., 178(2019), pp. 503-558.

\bibitem{dil} {D. Drusvyatskiy, A. D. Ioffe, and  A. S. Lewis}, {\em Nonsmooth optimization using Taylor-like models: error bounds, convergence, and termination criteria}, Math. Program., 185(2021), pp. 357-383.

\bibitem{ddmp} {D. Drusvyatskiy, D. Davis, K. J.  MacPhee, and C. Paquette}, {\em Subgradient methods for sharp weakly convex functions}, J. Optim. Theory Appl., 179(2018), pp. 962-982.

\bibitem{g} {H. Gfrerer}, {\em On directional metric regularity, subregularity and optimality conditions for nonsmooth mathematical programs}, Set-Valued Var. Anal., 21(2013), pp. 151–176.

\bibitem{i} {A. D. Ioffe }, {\em Variational analysis of regular mappings: theory and applications}, Springer, Switzerland, 2017.

\bibitem{io} {A. D. Ioffe and J. V. Outrata}, {\em On metric and calmness qualification conditions in subdifferential calculus}, Set-Valued Var. Anal., 16(2008), pp. 199-227.

\bibitem{k} {K. C. Kiwiel}, {\em Convergence of the gradient sampling algorithm for nonsmooth nonconvex optimization}, SIAM J. Optim., 18(2007), pp. 379–388.

\bibitem{l} { G. Lan}, {\em An optimal method for stochastic composite optimization}, Math. Program., 133(2012), pp. 1-33.

\bibitem{lw} {A. Lewis and S. J. Wright}, {\em A proximal method for convex minimization}, Math. Program., 158(2016), pp. 501–546.

\bibitem{mins} {L. Minchenko and S. Stakhovski}, {\em Parametric nonlinear programming problems under the relaxed constant rank condition}, SIAM J. Optim., 21(2011), pp. 314–332.

\bibitem{ash} { A. Mohammadi}, {\em Variational analysis of composite optimization}, PhD dissertation, Wayne State University. 2020.

\bibitem{m22} { A. Mohammadi}, {\em Subderivative method: a generalized gradient descent method for solving subdifferentially non-regular problems}, in preparation.

\bibitem{mm21} { A. Mohammadi and B. Mordukhovich}, {\em Variational analysis in normed spaces with applications to constrained optimization}, SIAM J. Optim., 31(2021), pp. 569-603.

\bibitem{mms1} {A. Mohammadi, B. Mordukhovich, and M. E. Sarabi}, {\em Variational analysis of composite models with applications to continuous optimization}, Math. Oper. Res., (2021), arXiv:1905.08837.

\bibitem{mms2} {A. Mohammadi, B. Mordukhovich and M. E. Sarabi}, {\em  Parabolic regularity in geometric variational analysis}, Trans. Amer. Soc., 374(2021), pp.1711-17630.

\bibitem{ms} {A. Mohammadi and M. E. Sarabi}, {\em Twice epi-differentiability of extended real-valued functions with applications in composite optimization}, SIAM J. Optim., 30(2020), pp. 2379–2409.

\bibitem{m18} {\sc B. S. Mordukhovich}, {\em Variational analysis and applications}, Springer, Cham, Switzerland, 2018.

\bibitem{N} {Y. Nesterov}, {\em Lectures on convex optimization}, Springer, 2018.

\bibitem{o} {J. V. Outrata }, {\em On the numerical solution of a class of Stackelberg problems}, Zeitschrift fur Oper. Res., 34(1990), pp. 255–277.

\bibitem{pra} {J-S. Pang, M. Razaviyan, and A. Alvarado}, {\em Computing B-Stationary points of nonsmooth DC Programs}, Math. Ope. Res., 42(2017), pp. 95–118.

\bibitem{rw} { R. T. Rockafellar and  R. J-B. Wets}, {\em Variational analysis}, Grundlehren Series (Fundamental Principles of Mathematical Sciences), Springer, Berlin, 2006.

\bibitem{r88} { R. T. Rockafellar}, {\em First- and second-order epi-differentiability in nonlinear programming}, Trans. Amer. Math. Soc., 307(1988), pp. 75–108.

\bibitem{r89} { R. T. Rockafellar}, {\em Second-order optimality conditions in nonlinear programming obtained by way of epi-derivatives}, Math. Ope. Res. Soc., 14(1989), pp. 462-484.

\bibitem{r21} { R. T. Rockafellar}, {\em Convergence of augmented Lagrangian methods in extensions beyond nonlinear programming}, preprint. (2021). 

\bibitem{jzz} {J. J. Ye, D. L. Zhu, and Q. J. Zhu}, {\em Exact penalization and necessary optimality conditions for generalized bilevel programming problems}, SIAM J. Optim., 7(1997), pp. 481–507.

\bibitem{zl} {Z. Zhang, and G. Lan}, {\em Optimal algorithms for convex nested stochastic composite optimization}, 2020, arXiv: 2011.10076.

\bibitem{zljsj} {j. Zhang, H. Lin, S. Jegelka, S. Sra, and A. Jadbabaie}, {\em Complexity of finding stationary points of nonsmooth nonconvex functions}, 2021, arXiv:2002.04130v3.

\end{thebibliography}
\end{document}